\newtheorem{theorem}{Theorem}[section]
\newtheorem{lemma}[theorem]{Lemma}
\newtheorem{definition}[theorem]{Definition}
\newtheorem{remark}[theorem]{Remark}
\numberwithin{equation}{section}
\newenvironment{proof}[1][Proof]{\noindent\textbf{#1.} }{\hfill $\Box$}
 \makeatletter\setlength{\textwidth}{15.0cm}
\begin{document}
\author{Shuxia Pan$^1$\thanks{Corresponding author. E-mail: shxpan@yeah.net.} \thanks{Supported by NSF of Gansu
Province of China (1208RJYA004) and the Development Program for Outstanding Young Teachers in Lanzhou University of Technology (1010ZCX019).} and Guo Lin$^2$ \thanks{Supported by NSF of China (11101194).} \\
$^1${Department of Applied Mathematics, Lanzhou University of Technology,}\\
{Lanzhou, Gansu 730050, People's Republic of China}\\
$^2${School of Mathematics and Statistics, Lanzhou University,}\\
{Lanzhou, Gansu 730000, People's Republic of China}}

\title{\textbf{Coinvasion-Coexistence Traveling Wave Solutions of an Integro-Difference Competition System}}\maketitle

\begin{abstract}
This paper is concerned with the traveling wave solutions of an integro-difference competition system, of which the purpose is to model the coinvasion-coexistence process of two competitors with age structure.  The existence of nontrivial traveling wave solutions is obtained by constructing generalized upper and lower solutions.  The asymptotic and nonexistence of traveling wave solutions are proved by combining the theory of asymptotic spreading with the idea of contracting rectangle.

\textbf{Keywords}: generalized upper and lower solutions; contracting rectangle; asymptotic behavior; minimal wave speed.

\textbf{AMS Subject Classification (2000)}:  45C05; 45M05; 92D40.
\end{abstract}

\begin{center}
Submitted to \emph{Journal of Difference Equations and Applications.}
\end{center}

\newpage

\section{Introduction}
\noindent

Competition is a universal phenomenon in ecological communities due to the limitation of resources. In population dynamics, there are many important evolutionary competitive systems formulating different competitive mechanism, such as the Lotka-Volterra system, Gilpin-Ayala competition model, the following difference system (see Cushing et al. \cite{cush})
\begin{equation}\label{cus}
\begin{cases}
p_{n+1}=
\frac{(1+r_1)p_n}{1+r_1(p_n+a_1q_n)},\\
q_{n+1}=
\frac{(1+r_2)q_n}{1+r_2(q_n+a_2p_n)},
\end{cases}
\end{equation}
and the following one (see Hassell and Comins \cite{haco}, Kang and Smith \cite{kang}, Li et al. \cite{lijia})
\begin{equation}\label{ricker}
\begin{cases}
X_{n+1}=
X_{n}e^{r_{1}(1-X_{n}-a_{1}Y_{n})},\\
Y_{n+1}=
Y_{n}e^{r_{2}(1-Y_{n}-a_{2}X_{n})},
\end{cases}
\end{equation}
in which  $n\in \mathbb{N}\bigcup \{0\},$ and $r_1>0,r_2>0, a_1\ge 0, a_2\ge 0$ are four constants.
From the viewpoint of population dynamics,  \eqref{ricker} implies that all the interspecific and intraspecific competition is confined to one of the developmental stages \cite[Model 9]{haco}. However, for some populations, age structure can influence population size and growth in a major way \cite[Section 1.7]{murray}. To describe the age structure in population dynamics, one recipe is the difference equations of higher order \cite[Section 2.5]{murray}. In particular, when both the interspecific competition and intraspecific competition can occur among the individuals with the same and the different age, and the competition mechanism is similar to that among the individuals with the same age, we can modify \eqref{ricker} as follows to reflect  the phenomenon
\begin{equation}\label{0001}
\begin{cases}
X_{n+1}=X_{n}e^{r_{1}\left(
1-X_{n}-\sum_{i=1}^{m}a_{i}X_{n-i}-\sum_{i=0}^{m}b_{i}Y_{n-i}\right) },\\
Y_{n+1}=Y_{n}e^{r_{2}\left(
1-Y_{n}-\sum_{i=1}^{m}e_{i}Y_{n-i}-\sum_{i=0}^{m}f_{i}X_{n-i}\right) },
\end{cases}
\end{equation}
in which $m\in \mathbb{N} \bigcup \{0\}$ is a constant, $a_i\ge 0, b_i\ge 0, e_i\ge 0, f_i\ge 0$ are constants describing the interspecific and intraspecific competition.

Although the spatially homogeneous evolutionary systems including \eqref{cus}-\eqref{0001} play very important roles in illustrating many processes such as the oscillatory levels of certain fish catches in the Adriatic \cite{v},  pharmacodynamics of HAART \cite{hod} and chaos \cite[Section 2.3]{murray}, it is inevitable to involve the spatial distribution of individuals with the ability of random walk in the problems such as the biology invasion, central pattern generator \cite{murray}.
In particular, the spatial propagation of evolutionary systems has been widely studied  since Fisher \cite{fisher}, and traveling wave solution is a useful index formulating the propagation \cite{vol}. In 1982, Weinberger \cite{wein} derived an evolutionary equation with discrete temporal variable and studied its traveling wave solutions. Since then, much attention has been paid to the spatial propagation of the corresponding spatio-temporal models of difference equations including \eqref{cus} by combining the comparison principle with other techniques, we refer to Lewis et al. \cite{lewis}, Weinberger et al. \cite{weinberger} for the traveling wave solutions reflecting competition-exclusion process, and Li \cite{libingtuan}, Lin and Li \cite{linlidc} and Lin et al. \cite{llrjmb} for the traveling wave solutions modeling competition-coinvasion process. It should be noted that from the viewpoint of monotone dynamical systems,  \eqref{cus} with any $r_1,r_2\in (0,\infty)$ admits proper comparison principle in an invariant region (e.g., $[0,1]\times [0,1]$).

Recently, Wang and Castillo-Chavez \cite{wc} investigated the spatial propagation of the following integro-difference system
\begin{equation}\label{1.10}
\begin{cases}
X_{n+1}(x)=\int_{\mathbb{R}}
X_{n}(y)e^{r_{1}(1-X_{n}(y)-a_{1}Y_{n}(y))}k_{1}(x-y)dy,\\
Y_{n+1}(x)=\int_{\mathbb{R}}
Y_{n}(y)e^{r_{2}(1-Y_{n}(y)-a_{2}X_{n}(y))}k_{2}(x-y)dy,
\end{cases}
\end{equation}
where $ n+1\in \mathbb{N}, x\in\mathbb{R},$  $X_n(x), Y_n(x)$ denote the densities
of two competitors at time $n$
at location $x$ in population dynamics, respectively, and the kernels $k_i, i=1,2,$ are probability functions describing the spatial dispersal of individuals. Li and Li \cite{lix} also studied the asymptotic behavior of traveling wave solutions of \eqref{1.10}.

To understand the difficulty in the study of \eqref{1.10}, we first give some properties of  \eqref{ricker}. When  $r_1>1$ and $r_2 >1$ hold, it is clear that
\[
\left[ 0,\frac{e^{r_{1}-1}}{r_{1}}\right] \times \left[ 0,\frac{e^{r_{2}-1}}{%
r_{2}}\right]
\]
is an invariant region of \eqref{ricker}, but
\[
xe^{r_{1}(1-x-a_{1}y)},\text{ \ \ \ }ye^{r_{2}(1-y-a_{2}x)}
\]
are not monotone for
\[
x\in \left[ 0,\frac{e^{r_{1}-1}}{r_{1}}\right] , \,\,\, y\in \left[ 0,\frac{e^{r_{2}-1}}{%
r_{2}}\right]
\]
such that we cannot find desired comparison principle of \eqref{ricker} similar to that of \eqref{cus}. Because of the invalidation of comparison principle, the study of \eqref{1.10} will be harder than that of the corresponding integro-difference system of \eqref{cus} from the viewpoint of monotone dynamical systems. When the competition-exclusion process of \eqref{1} is concerned, it is a locally cooperative system after a change of variables. In Wang and Castillo-Chavez \cite{wc}, the authors studied the traveling wave solutions and asymptotic spreading
by constructing two auxiliary cooperative systems generating monotone semiflows (we refer to \cite{hsuzhao,llw,yi} for constructing auxiliary monotone equations in the study of scalar nonmonotone integro-difference equations).

Very recently, Li and Li \cite{lili} investigated the existence of nontrivial positive traveling wave solutions of \eqref{1.10} if
\begin{description}
\item[$\bullet $] $k_1,k_2$ take the form of Gaussian kernels;
\item[$\bullet $] $a_1,a_2 \in [0,1);$
\item[$\bullet$] $r_1,r_2 \in (0,1].$
\end{description}
With these assumptions and wave speed larger than a threshold, the authors established the existence of traveling wave solutions connecting the trivial equilibrium with the positive one by constructing upper and lower solutions.
Clearly, \eqref{1.10} with these assumptions satisfies comparison principle and is a special form of
\begin{equation}\label{1}
\begin{cases}
X_{n+1}(x)=\int_{\mathbb{R}}X_{n}(y)e^{r_{1}\left(
1-X_{n}(y)-\sum_{i=1}^{m}a_{i}X_{n-i}(y)-\sum_{i=0}^{m}b_{i}Y_{n-i}(y)\right) }k_{1}(x-y)dy,\\
Y_{n+1}(x)=\int_{%
\mathbb{R}}Y_{n}(y)e^{r_{2}\left(
1-Y_{n}(y)-\sum_{i=1}^{m}e_{i}Y_{n-i}(y)-\sum_{i=0}^{m}f_{i}X_{n-i}(y)\right) }k_{2}(x-y)dy,
\end{cases}
\end{equation}
in which  $n+1\in\mathbb{N}, x\in\mathbb{R}.$   For the parameters and kernel functions in \eqref{1}, we first make the following assumptions:
\begin{description}
\item[(A1)] $k_i$ is Lebesgue measurable and integrable
 on $\mathbb{R}$ and
$\int_{\mathbb{R}}k_i(y)dy=1,i=1,2;$
\item[(A2)] $k_i(y)=k_i(-y)\ge 0,y\in \mathbb{R}, $ and  for each $\lambda \in\mathbb{R},\int_{\mathbb{R}}k_i(y)e^{\lambda y}dy<\infty , i=1,2;$
\item[(A3)] $r_1>0,r_2>0,a_i\ge 0,e_i\ge 0$ for $i\in \{1,2,\cdots, m\};$
\item[(A4)] $b_i\ge 0, f_i\ge 0$ for $i\in \{0, 1,2,\cdots, m\};$
\item[(A5)] $r_1,r_2\in (0,1]$ and
\[
\sum_{i=1}^{m}a_{i}+\sum_{i=0}^{m}b_{i}<1, \sum_{i=1}^{m}e_{i}+\sum_{i=0}^{m}f_{i}<1.
\]
\end{description}
Similar to the study of \eqref{1.10} in \cite{lili}, we shall investigate the spatial propagation of \eqref{1} by traveling wave solutions  that formulate the synchronous invasion of two species admitting age structure,
and we refer to \cite{shi} for the historical records of several competitive species which successfully invaded a habitat together.

To consider the traveling wave solutions of \eqref{1}, the first difficulty is that the comparison principle may fail such that it is not an easy job to define upper and lower solutions similar to those in Lin et al. \cite{llrjmb}. Moreover, when $\sum_{i=0}^{m}b_{i}=0$ and $\sum_{i=0}^{m}f_{i}=0,$ the definition of upper and lower solutions in Lin and Li \cite{ll-jmaa} and  the techniques in Pan and Li \cite{pl} can not be applied to \eqref{1} if $\sum_{i=1}^ma_i >0$ or $\sum_{i=1}^me_i >0$ is true.

Very recently, Lin \cite{lin} developed the theory of traveling wave solutions of integro-difference systems of higher order. Applying the generalized upper and lower solutions, the existence of traveling wave solutions was obtained. In particular, \cite{lin} does not require the asymptotic behavior of generalized upper and lower solutions even if the system is not cooperative. Further applying the contracting rectangle, the authors presented a sufficient condition of the asymptotic behavior of traveling wave solutions.
By the theory in \cite{lin}, we prove the existence of nontrivial positive traveling wave solutions of \eqref{1} if (A1)-(A4) hold. When (A1)-(A5) are true, we further study the asymptotic behavior of traveling wave solutions, which includes/improves the main results of Li and Li \cite{lili}. Under the assumptions (A1)-(A4), we also obtain the existence of nontrivial traveling wave solutions formulating successful invasion of two competitors.  Furthermore, we investigate the nonexistence of traveling wave solutions by the theory of asymptotic spreading of integro-difference equations, which remains true for the model in \cite{lili}.

The rest of this paper is organized as follows. In Section 2, we list some necessary preliminaries. The existence of nontrivial positive traveling wave solutions is proved in Section 3. To answer the asymptotic behavior of traveling wave solutions in Section 4, we apply the contracting rectangle. Finally, the nonexistence of traveling wave solutions is confirmed in Section 5.

\section{Preliminaries}
\noindent

Let
$X$ be the set of uniformly continuous
and bounded functions from $\mathbb{R}$ to
$\mathbb{R}^2$. Moreover,  we shall use the standard partial order in $\mathbb{R}^2$ or $X.$
Let $\|\cdot \|$ be the supremum norm in $\mathbb{R}^2$ and $\mu >0$ be a constant, we  define
\[
B_{\mu }\left( \mathbb{R},\mathbb{R}^{2}\right) =\left\{\Phi \in
X:\sup_{x \in \mathbb{R}}\| \Phi (x)\|
e^{-\mu | x | }<\infty \right\},
\]%
and the decay norm
\[
\| \Phi \| _{\mu }=\sup_{x \in
\mathbb{R}}\| \Phi (x )\| e^{-\mu |
x | }, \Phi \in B_{\mu }\left( \mathbb{R},\mathbb{R}^{2}\right).
\]%
Then  $\left( B_{\mu }\left( \mathbb{R},\mathbb{R}%
^{2}\right) ,\| \cdot \| _{\mu}\right) $ is a
Banach space.

In this paper, a traveling wave solution of \eqref{1} is a special solution
of the form $X_n(x)=\phi (t), Y_n(x)=\psi(t), t=x+cn$ with the wave
speed $c>0$ and the wave profile $(\phi,\psi)\in X$. Then
$(\phi,\psi)$ and $c$ must satisfy the following recursion system
\begin{equation}\label{2}
\begin{cases}
\phi (t+c)=\int_{\mathbb{R}}\phi (y)e^{r_{1}(1-\phi (y)- \sum_{i=1}^{m}a_{i}\phi (y-ci)-\sum_{i=0}^{m}b_{i}\psi (y-ci))}k_{1}(t-y)dy,t\in\mathbb{R},\\
\psi
(t+c)=\int_{\mathbb{R}} \psi (y)e^{r_{2}(1-\psi (y)-\sum_{i=1}^{m}e_{i}\psi(y-ci)-\sum_{i=0}^{m}f_{i}\phi(y-ci))}k_{2}(t-y)dy,t\in\mathbb{R}.
\end{cases}
\end{equation}

In this paper, similar to those in \cite{lili,libingtuan,linlidc,llrjmb}, we are interested in modeling the simultaneous invasion of two competitors. Therefore, we also require the following asymptotic boundary condition
\begin{equation}\label{a}
\lim_{t\to -\infty}(\phi(t),\psi(t))=(0,0), \, \,\,\, \liminf_{t\to\infty}\phi(t)>0,\,
\liminf_{t\to\infty}\psi(t)>0.
\end{equation}
Clearly, \eqref{2}-\eqref{a} can model the coinvasion-coexistence process of two competitors: at any fixed location $x\in\mathbb{R}$, there was not individual of the both species a long time ago ($n\to -\infty$ such that $t=x+cn\to -\infty$), but two competitors will coexist after a long time ($n\to \infty$ such that $t \to \infty$). In particular, we also investigate the following asymptotic boundary condition
\begin{equation}\label{kkkk}
\lim_{t\to - \infty}(\phi(t),\psi(t))=(0,0),\,\,\,\, \lim_{t\to\infty}\phi(t)=k_1,\,
\lim_{t\to\infty}\psi(t)=k_2,
\end{equation}
in which $k_1>0,k_2>0$ are defined by
\begin{equation*}
\begin{cases}
k_1+\sum_{i=1}^{m}a_{i}k_1+\sum_{i=0}^{m}b_{i}k_2=1, \\
k_2+\sum_{i=1}^{m}e_{i}k_2+\sum_{i=0}^{m}f_{i}k_1=1
\end{cases}
\end{equation*}
provided that
\[
1+ \sum_{i=1}^{m}a_{i}> \sum_{i=0}^{m}f_{i}, 1+ \sum_{i=1}^{m}e_{i}> \sum_{i=0}^{m}b_{i}.
\]
Clearly, the above condition is true if (A5) holds.

We now present some results established by Hsu and Zhao \cite{hsuzhao} and consider the following discrete time recursion
\begin{equation}\label{mon}
\begin{cases}
u_{n+1}(x)=\int_{\mathbb{R}}
b(u_{n}(y))k(x-y)dy,x\in\mathbb{R},n=0,1,2,\cdots, \\
u_0(x)=u(x),x\in\mathbb{R},
\end{cases}
\end{equation}
in which $u(x)$ is bounded and uniformly continuous, $k$ satisfies (A1)-(A2) and $b:\mathbb{R}^+ \to \mathbb{R}^+ $ such that:
\begin{description}
\item[(B1)] there exists $u^+>0$ such that $b(0)=0, b(u^+)=u^+,$  $b(u)>u$ for $ u\in (0, u^+)$ while $b(u)<u$ for $u>u^+;$
    \item[(B2)] for some $U^+\ge u^+,$ $b(u), u\in [0, U^+]$ is continuous and monotone;
        \item[(B3)] $b_0=\lim_{u\to 0+}b(u)/u $ exists  and $b_0>1$ holds such that $b(u)<b_0u, u\in (0,U^+];$
        \item[(B4)] there exists $L>0$ such that
        $b_0u-b(u)<Lu^2, u\in (0, U^+].$
\end{description}

For recursion \eqref{mon}, the following results hold (see \cite{hsuzhao}).
\begin{lemma}\label{com}
Assume that (B1)-(B4) hold and $0\le u(x)\le U^+$ for all $x\in\mathbb{R}.$
\begin{description}
\item[(1)] $0\le u_n(x)\le U^+$ for all $n\in\mathbb{N}, x\in\mathbb{R}.$
    \item[(2)] If $0\le v_n(x)\le U^+, n\in\mathbb{N}\bigcup \{0 \}, x\in\mathbb{R}$ such that
      \[
      v_{n+1}(x)\ge (\le)\int_{\mathbb{R}}
b(v_{n}(y))k(x-y)dy, \,\, v_0(x)\ge (\le)u(x),
      \]
      then $v_n(x)\ge (\le)u_n(x) , n\in\mathbb{N}, x\in\mathbb{R}.$
      \item[(3)]  Define
          \[
          c_0=\inf_{\lambda >0}\frac{\ln (b_0 \int_{\mathbb{R}}e^{\lambda y}k(y)dy)}{\lambda}.
          \]
If $c\in (0,c_0)$ holds and $u(x)>0$ admits nonempty support,  then
\[
\liminf_{n\to\infty}\inf_{|x|<cn}u_n(x)=\limsup_{n\to\infty}\sup_{|x|<cn}u_n(x)=u^+.
\]
\end{description}
\end{lemma}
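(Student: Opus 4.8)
The plan is to handle the three assertions in increasing order of difficulty, spending essentially all the effort on (3). Assertion (1) follows by induction once one notes that (B1)-(B2) force $b$ to be nondecreasing on $[0,U^+]$ (a decreasing $b$ with $b(0)=0$ would be negative on $(0,U^+]$, contradicting $b:\mathbb{R}^+\to\mathbb{R}^+$) and that $b$ maps $[0,U^+]$ into itself, since $b(0)=0$ and $b(U^+)\le U^+$ by (B1); then $0\le u_n\le U^+$ gives $0\le b(u_n(\cdot))\le U^+$, and integrating against the probability density $k$ preserves the bound. Assertion (2) is just the order-preserving property of $Q[v](x):=\int_{\mathbb{R}}b(v(y))k(x-y)\,dy$ on $[0,U^+]$-valued functions, which is immediate from $b$ nondecreasing and $k\ge 0$: from $v_0\ge u_0$ and $v_n\ge u_n$ one gets $v_{n+1}\ge Q[v_n]\ge Q[u_n]=u_{n+1}$, and the ``$\le$'' case is the mirror image. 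I would state these two parts briefly and move on.

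For (3) I would establish an upper bound, a lower bound, and then combine them. The upper bound comes from a spatially constant supersolution: with $\bar u_0=U^+$ and $\bar u_{n+1}=b(\bar u_n)$, the scalar dynamics of $b$ (unique positive fixed point $u^+$, with $b(u)>u$ below it and $b(u)<u$ above) make $\bar u_n$ decrease to $u^+$; since $\bar u_n$ is a supersolution of \eqref{mon} with $\bar u_0\ge u$, part (2) gives $u_n\le\bar u_n$, hence $\limsup_{n\to\infty}\sup_{x\in\mathbb{R}}u_n(x)\le u^+$. For the lower bound I would first prove invasion persistence. Fix $c_1\in(0,c_0)$; by continuity (using the finite exponential moments in (A2)) choose $\epsilon>0$ so small that $b_0-\epsilon>1$ and $c_1<c_\epsilon:=\inf_{\lambda>0}\ln\!\big((b_0-\epsilon)\int_{\mathbb{R}}e^{\lambda y}k(y)\,dy\big)/\lambda$. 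By (B4), $b(u)\ge b_0u-Lu^2\ge(b_0-\epsilon)u$ on $[0,\delta_0]$ with $\delta_0:=\epsilon/L$ (shrunk, if necessary, so that $(b_0-\epsilon)\delta_0\le U^+$). Introduce the continuous nondecreasing minorant $g(u):=(b_0-\epsilon)\min\{u,\delta_0\}$; then $0\le g\le b$ on $[0,U^+]$ by (B3) and monotonicity of $b$, $g$ has the single positive fixed point $u^*:=(b_0-\epsilon)\delta_0$ with $g(u)>u$ on $(0,u^*)$, and $g$ is linear near $0$ with slope $b_0-\epsilon$, so the monotone recursion \eqref{mon} with $g$ in place of $b$ has spreading speed $c_\epsilon$. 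Running it from $w_0:=\min\{u,\delta_0\}$ (nontrivial, continuous, bounded by $u^*$, and $\le u$), the classical spreading theory for monotone integro-difference recursions (Weinberger \cite{wein}; see also \cite{hsuzhao}) gives $\liminf_{n\to\infty}\inf_{|x|<cn}w_n(x)=u^*$ for every $c<c_\epsilon$, while the ``$\le$'' half of (2) gives $w_n\le u_n$; hence $f(c):=\liminf_{n\to\infty}\inf_{|x|<cn}u_n(x)\ge u^*>0$ for all $c<c_1$.

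The last step is a bootstrap lifting $u^*$ up to $u^+$. I would prove: if $f(c)\ge m$ for all $c<c_1$ with $0<m\le u^+$, then $f(c)\ge b(m)$ for all $c<c_1$. Given a target $c<c_1$, pick $c''\in(c,c_1)$; for any $\eta>0$ and all large $n$ one has $u_n(y)\ge m-\eta$ on $|y|<c''n$, so by monotonicity of $b$ and $\int_{\mathbb{R}}k=1$,
\[
u_{n+1}(x)\ge b(m-\eta)\int_{|y|<c''n}k(x-y)\,dy\ge b(m-\eta)\Big(1-\int_{|z|>(c''-c)n}k(z)\,dz\Big)\qquad(|x|<cn);
\]
letting $n\to\infty$ and then $\eta\to0$ (continuity of $b$) yields $f(c)\ge b(m)$. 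Iterating from $m_0:=u^*$ with $m_{j+1}:=b(m_j)$, the scalar dynamics of $b$ force $m_j\uparrow u^+$, so $f(c)\ge u^+$ for all $c<c_1$; since $c_1<c_0$ was arbitrary, $\liminf_{n\to\infty}\inf_{|x|<cn}u_n(x)\ge u^+$ for all $c<c_0$. Combined with the upper bound and the trivial $\inf_{|x|<cn}u_n\le\sup_{|x|<cn}u_n$, this gives $u^+\le\liminf_{n\to\infty}\inf_{|x|<cn}u_n(x)\le\limsup_{n\to\infty}\sup_{|x|<cn}u_n(x)\le u^+$, which is exactly (3).

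The step I expect to be the real obstacle is the lower bound. Parts (1) and (2) and the upper bound are soft comparison arguments, but for the lower bound one must step outside the given recursion: manufacture a \emph{monotone} minorant $g$ with exactly the linearization $b_0-\epsilon$ and with positive fixed point lying inside $[0,U^+]$, invoke the spreading-speed theorem for monotone integro-difference recursions, and then in the bootstrap keep track of the moving windows $|x|<cn$ uniformly in $c$, which is where the exponential-moment tail bound from (A2) is used.
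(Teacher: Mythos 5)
Your proposal is correct, but a comparison with "the paper's proof" is somewhat moot: the paper does not prove Lemma \ref{com} at all, it simply quotes the result from Hsu and Zhao \cite{hsuzhao}. What you wrote is essentially a self-contained reconstruction of the standard argument behind that citation. Parts (1)--(2) are handled the same way anyone would (noting that (B1)--(B2) force $b$ to be nondecreasing on $[0,U^+]$ and that $[0,U^+]$ is invariant), and your part (3) -- constant supersolution iterating down to $u^+$ for the upper bound, the monotone minorant $g(u)=(b_0-\epsilon)\min\{u,\delta_0\}$ with linearization $b_0-\epsilon$ to import Weinberger's spreading theorem for a lower bound, then the pointwise bootstrap $f(c)\ge m \Rightarrow f(c)\ge b(m)$ lifting $u^*$ to $u^+$ -- is in the same spirit as the Hsu--Zhao/Weinberger machinery; the main structural difference is that Hsu--Zhao treat possibly nonmonotone $b$ by squeezing it between two monotone envelopes, whereas under (B2) you can and do work with $b$ directly, which makes the argument cleaner though slightly less general. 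Two small points worth tightening if you write this up: the claim $c_\epsilon\to c_0$ as $\epsilon\to 0$ deserves a line (it uses $\int_{\mathbb{R}}e^{\lambda y}k(y)\,dy\ge 1$, so the near-$\lambda=0$ regime stays harmless and the perturbation $\ln(1-\epsilon/b_0)/\lambda$ is uniformly small for $\lambda$ bounded away from $0$), and in the bootstrap the window for $u_{n+1}$ should be $|x|<c(n+1)$ rather than $|x|<cn$, which only shifts the tail estimate to $\int_{|z|>(c''-c)n-c}k(z)\,dz\to 0$. Note also that your argument is not fully from scratch: it still delegates the key spreading statement for the auxiliary monotone recursion to Weinberger \cite{wein}/Hsu--Zhao \cite{hsuzhao}, which is a perfectly reasonable place to stop.
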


\section{Existence of Positive Traveling Wave Solutions}
\noindent

In this section, we address (A1)-(A4) and consider the existence of nonnegative solutions of \eqref{2}. Denote
\[
\Delta_i(\lambda,c)=\int_{\mathbb{R}}e^{r_i+\lambda y -\lambda
c}k_i(y)dy,i=1,2,
\]
for $\lambda\in\mathbb{R}$ and $ c\ge 0.$ By (A1)-(A3), $\Delta_i(\lambda,c), i=1,2,$ are well defined and the following result is clear.
\begin{lemma}\label{le2.1}
There exists a positive constant $c^* >0$ such that $c<c^*$ implies that
$\Delta_1(\lambda,c) >1$ or $\Delta_2(\lambda,c) >1$ for any $\lambda \ge 0$
while $c>c^*$  implies that
$\Delta_i(\lambda,c)=1 $ has at least one  positive root
  for  each $i=1,2$.
In
addition, when $c> c^*$ is true, let $\lambda_{i}(c)$ satisfy $\Delta_i(\lambda_i(c),c)=1 $ and $\Delta_i(\lambda,c)>1$ for $ \lambda \in (0,\lambda_i(c)), $ then there exists $\gamma \in (1,2)$ such
that $\Delta_i(\lambda'_{i}(c),c)<1 $ for all $\lambda'_{i}(c) \in
(\lambda_{i}(c), \gamma \lambda_{i}(c)],$ $ i=1,2.$
\end{lemma}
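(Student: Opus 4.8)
The plan is to reduce the whole statement to elementary convex analysis of the exponential moments of the kernels. Write $\Delta_i(\lambda,c)=e^{r_i-\lambda c}K_i(\lambda)$ with $K_i(\lambda):=\int_{\mathbb{R}}e^{\lambda y}k_i(y)\,dy$, which is finite for every $\lambda\in\mathbb{R}$ by (A2). First I would record the properties of $K_i$ that make the argument work: $K_i$ is continuous, $K_i(0)=1$, $K_i$ is even (so $K_i(\lambda)\ge 1$ and $\ln K_i(\lambda)>0$ for $\lambda\neq0$), and $\ln K_i$ is convex on $\mathbb{R}$ — the cumulant generating function is convex, which follows at once from H\"older's inequality. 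Consequently, for fixed $c\ge0$ the function $P_i(\lambda):=\ln\Delta_i(\lambda,c)=r_i-\lambda c+\ln K_i(\lambda)$ is convex in $\lambda$, $P_i(0)=r_i>0$ by (A3), and for $\lambda>0$ one has $\Delta_i(\lambda,c)=1\iff P_i(\lambda)=0\iff c=g_i(\lambda)$, where $g_i(\lambda):=\big(r_i+\ln K_i(\lambda)\big)/\lambda$.

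Next I would analyze $g_i$ on $(0,\infty)$. It is continuous; $g_i(\lambda)\to+\infty$ as $\lambda\to0^+$ because the numerator tends to $r_i>0$; and it is bounded below by a positive constant as $\lambda\to\infty$: for small $\varepsilon>0$ the half-line $\{y\ge\varepsilon\}$ carries positive $k_i$-mass (since $k_i$ is a genuine density), so $K_i(\lambda)\ge e^{\lambda\varepsilon}\int_{y\ge\varepsilon}k_i(y)\,dy$ for $\lambda>0$, whence $g_i(\lambda)\ge\varepsilon+O(1/\lambda)$. Therefore $c_i^*:=\inf_{\lambda>0}g_i(\lambda)$ is attained at some finite $\bar\lambda_i>0$ and $c_i^*>0$. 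I would then set $c^*:=\max\{c_1^*,c_2^*\}>0$.

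For $c<c^*$ there is an index $i_0$ with $c<c_{i_0}^*\le g_{i_0}(\lambda)$ for all $\lambda>0$, i.e.\ $P_{i_0}(\lambda)>0$, so $\Delta_{i_0}(\lambda,c)>1$ for all $\lambda>0$; together with $\Delta_{i_0}(0,c)=e^{r_{i_0}}>1$ this gives $\Delta_{i_0}(\lambda,c)>1$ for every $\lambda\ge0$, which is the first alternative. For $c>c^*$ one has $c>c_i^*=g_i(\bar\lambda_i)$ for both $i$, so $P_i(\bar\lambda_i)=\bar\lambda_i\big(g_i(\bar\lambda_i)-c\big)<0$; since $P_i(0)>0$ the intermediate value theorem produces a zero of $P_i$ in $(0,\bar\lambda_i)$, and I would define $\lambda_i(c)$ to be the smallest positive zero of $P_i$. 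Then $P_i$ has no zero in $(0,\lambda_i(c))$ while $P_i(0)>0$, so $\Delta_i(\lambda,c)>1$ on $(0,\lambda_i(c))$ and $\Delta_i(\lambda_i(c),c)=1$; in particular $\Delta_i(\,\cdot\,,c)=1$ has a positive root for each $i$.

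Finally, for the claim about $\gamma$: since $\lambda_i(c)<\bar\lambda_i$, $P_i(\lambda_i(c))=0$ and $P_i(\bar\lambda_i)<0$, convexity forces $P_i(\lambda)<0$ for all $\lambda\in(\lambda_i(c),\bar\lambda_i]$ (the graph lies below the chord, which is strictly negative there). Hence $\beta_i:=\bar\lambda_i/\lambda_i(c)>1$ satisfies $\Delta_i(\lambda,c)<1$ on $(\lambda_i(c),\beta_i\lambda_i(c)]$, and $\gamma:=\min\{\beta_1,\beta_2,3/2\}\in(1,2)$ works simultaneously for $i=1,2$. I expect the only point needing a little care to be the positivity and attainment of $c_i^*$, which hinges on $k_i$ being a genuine even probability density with mass away from the origin; everything else is the intermediate value theorem together with the fact that a convex function that is positive at $0$ has at most two zeros and is strictly negative between them.
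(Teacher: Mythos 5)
Your argument is correct and follows essentially the same route as the paper's (sketched) proof: the paper likewise rests on convexity of $\lambda\mapsto\ln\Delta_i(\lambda,c)$ and the characterization $c_i^*=\inf_{\lambda>0}\frac{1}{\lambda}\ln\bigl(e^{r_i}\int_{\mathbb{R}}e^{\lambda y}k_i(y)\,dy\bigr)$ with $c^*=\max\{c_1^*,c_2^*\}$, citing Hsu--Zhao, Liang--Zhao and Weinberger for the details you work out explicitly (including the $\gamma$-claim, which the paper does not prove at all). One small remark: your justification that the infimum defining $c_i^*$ is attained is incomplete as stated (a function tending to $+\infty$ at $0^+$ and bounded below by a positive constant near $\infty$ need not attain its infimum), but nothing is lost, since for $c>c^*$ you only need some $\bar\lambda_i$ with $g_i(\bar\lambda_i)<c$, which exists directly from the definition of the infimum, and the rest of your convexity argument goes through verbatim with that choice.
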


In fact, $\Delta_1(\lambda,c)$ is convex and there exists $c_1^*$ such that $\Delta_1(\lambda,c)=1$ has at least one real root if $c>c^*_1$ while $\Delta_1(\lambda,c)>1$  if $\lambda \ge 0$ and $ c<c^*_1,$ and $c_1$ can also be formulated as follows
\[
c_1^*= \inf_{\lambda >0}\frac{\ln (e^{r_1} \int_{\mathbb{R}}e^{\lambda y}k_1(y)dy)}{\lambda}
\]
by Hsu and Zhao \cite{hsuzhao}, Liang and Zhao \cite{liangzhao}, Weinberger \cite{wein}. In a similar way, we can obtain $c^*_2>0$ by
\[
c_2^*= \inf_{\lambda >0}\frac{\ln (e^{r_2} \int_{\mathbb{R}}e^{\lambda y}k_2(y)dy)}{\lambda}
\]
and $c^*=\max\{c_1^*, c_2^*\}.$

By Lemma \ref{le2.1}, if $c>c^*$ is fixed, then we can define continuous functions
\begin{equation*}
\overline{\phi }(t)=\min \left\{ e^{\lambda _{1}t},~l_1\right\}
, \,\,\,
\overline{\psi }(t)=\min \left\{ e^{\lambda _{2}t},~l_2\right\}
\end{equation*}%
with
\[
l_i=
\begin{cases}
1, \,\,r_i \le 1,\\
\frac{e^{r_i-1}}{r_i},\,\, r_i>1
\end{cases}
\,\,\,\text{ for }i=1,2.
\]
Clearly,
$[0,l_1]\times [0, l_2]$ is an invariant region of  \eqref{ricker}. Further define
\begin{equation*}
\underline{\phi }(t) =
\max\{e^{\lambda _{1}t}-\rho e^{\eta \lambda _{1}t}, 0\},\,\,
\underline{\psi }(t) =
\max\{
e^{\lambda _{2}t}-\rho e^{\eta \lambda _{2}t},0\},
\end{equation*}
where $\rho >1$ is a positive constant clarified later and $\eta \in (1,2)$ is a constant such that
\[
\eta \lambda _{1} < \lambda _{1}+\lambda _{2}, \text{  }\eta
\lambda _{2} < \lambda _{1}+\lambda _{2}, \text{ }\Delta_1(\eta
\lambda _{1},c)<1,\text{ }\Delta_2(\eta \lambda _{2},c)<1.
\]

Using these notations, we give the following potential wave profile set
\[
\Gamma=\{(\phi, \psi)\in X, (\underline{\phi}, \underline{\psi})\le (\phi, \psi)\le (\overline{\phi}, \overline{\psi})\},
\]
which exhibits the following properties.
\begin{lemma}\label{le2.2}
$\Gamma$ is convex and nonempty. Moreover, it is closed and bounded with respect to the decay norm $\|\cdot \|_{\mu}$.
\end{lemma}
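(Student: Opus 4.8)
The plan is to check the four assertions in turn; convexity, nonemptiness and boundedness are immediate, while closedness is the only step that requires any care.

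\textbf{Convexity.} Given $(\phi_1,\psi_1),(\phi_2,\psi_2)\in\Gamma$ and $\theta\in[0,1]$, the combination $\theta(\phi_1,\psi_1)+(1-\theta)(\phi_2,\psi_2)$ again belongs to $X$ (uniformly continuous bounded functions form a linear space) and still lies between $(\underline{\phi},\underline{\psi})$ and $(\overline{\phi},\overline{\psi})$, because each of $\phi_1,\phi_2$, respectively $\psi_1,\psi_2$, does and the barriers are common to both. Hence $\Gamma$ is convex.

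\textbf{Nonemptiness.} I would show $(\overline{\phi},\overline{\psi})\in\Gamma$ directly. The function $\overline{\phi}=\min\{e^{\lambda_1 t},l_1\}$ is continuous, bounded by $l_1$, and globally Lipschitz (it equals the Lipschitz function $e^{\lambda_1 t}$ for $t\le\lambda_1^{-1}\ln l_1$ and the constant $l_1$ thereafter), hence uniformly continuous; likewise $\overline{\psi}\in X$. It remains to verify $(\underline{\phi},\underline{\psi})\le(\overline{\phi},\overline{\psi})$, the reverse inequality being trivial. Fix $i\in\{1,2\}$ and $t\in\mathbb{R}$: if $e^{\lambda_i t}-\rho e^{\eta\lambda_i t}\le 0$ the lower function equals $0$ and the inequality is clear; otherwise $\rho e^{(\eta-1)\lambda_i t}<1$, and since $\rho>1$, $\eta>1$ and $\lambda_i=\lambda_i(c)>0$ (Lemma \ref{le2.1}), this forces $t<0$, so $e^{\lambda_i t}<1\le l_i$ and therefore $\overline{\phi}(t)$ (resp. $\overline{\psi}(t)$) equals $e^{\lambda_i t}\ge e^{\lambda_i t}-\rho e^{\eta\lambda_i t}$. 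Here I use $l_i\ge 1$, which for $r_i>1$ is just the elementary bound $e^{r_i-1}\ge r_i$. Thus $(\overline{\phi},\overline{\psi})\in\Gamma$ and $\Gamma\neq\emptyset$.

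\textbf{Boundedness.} Every $(\phi,\psi)\in\Gamma$ satisfies $0\le\phi\le\overline{\phi}\le l_1$ and $0\le\psi\le\overline{\psi}\le l_2$ pointwise, so $\|(\phi(x),\psi(x))\|\le\max\{l_1,l_2\}$ for all $x$, whence $\|(\phi,\psi)\|_{\mu}=\sup_{x\in\mathbb{R}}\|(\phi(x),\psi(x))\|e^{-\mu|x|}\le\max\{l_1,l_2\}$. Hence $\Gamma$ is bounded in the decay norm.

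\textbf{Closedness, and the main obstacle.} The device is that for each fixed $x\in\mathbb{R}$ the evaluation $\Phi\mapsto\Phi(x)$ is continuous from $(B_{\mu}(\mathbb{R},\mathbb{R}^2),\|\cdot\|_{\mu})$ to $\mathbb{R}^2$, since $\|\Phi(x)\|\le e^{\mu|x|}\|\Phi\|_{\mu}$; equivalently, $\|\cdot\|_{\mu}$-convergence forces uniform convergence on every bounded interval, hence pointwise convergence. So if $(\phi_k,\psi_k)\in\Gamma$ converges to $(\phi,\psi)$ in $\|\cdot\|_{\mu}$, then letting $k\to\infty$ in $(\underline{\phi},\underline{\psi})\le(\phi_k,\psi_k)\le(\overline{\phi},\overline{\psi})$ gives $(\underline{\phi},\underline{\psi})\le(\phi,\psi)\le(\overline{\phi},\overline{\psi})$, and $(\phi,\psi)$ is bounded by $\max\{l_1,l_2\}$ and continuous. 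Invariantly, $\Gamma$ is the intersection over $x\in\mathbb{R}$ of the $\|\cdot\|_{\mu}$-closed sets $\{\Phi\in X:\underline{\phi}(x)\le\phi(x)\le\overline{\phi}(x),\ \underline{\psi}(x)\le\psi(x)\le\overline{\psi}(x)\}$, hence closed. I expect the genuinely delicate point here to be confirming that the $\|\cdot\|_{\mu}$-limit still belongs to $X$, i.e. remains uniformly continuous — since the decay norm controls the limit only loosely far out — and I would settle it via the local-uniform-convergence observation exactly as the corresponding preliminary lemma is handled in the functional framework of \cite{lin}. Granting that, $(\phi,\psi)\in\Gamma$, so $\Gamma$ is closed in $(B_{\mu}(\mathbb{R},\mathbb{R}^2),\|\cdot\|_{\mu})$.
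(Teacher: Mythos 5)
Your proof is correct; the paper states Lemma \ref{le2.2} without proof, and your verification (convexity and boundedness from the pointwise bounds $0\le\phi\le\overline{\phi}\le l_1$, $0\le\psi\le\overline{\psi}\le l_2$; nonemptiness via $(\overline{\phi},\overline{\psi})\in\Gamma$ using $l_i\ge 1$ and $t<0$ on the support of the lower functions; closedness via continuity of the evaluations $\Phi\mapsto\Phi(x)$ with respect to $\|\cdot\|_{\mu}$) is exactly the routine argument the authors take for granted. The one point you flag as delicate — uniform continuity of the $\|\cdot\|_{\mu}$-limit — is not actually needed for the closedness of $\Gamma$: closedness in $\left(B_{\mu}(\mathbb{R},\mathbb{R}^{2}),\|\cdot\|_{\mu}\right)$ only requires that a limit already belonging to the ambient space $B_{\mu}\subseteq X$ (hence automatically uniformly continuous) satisfy the barrier inequalities, which your evaluation argument delivers; the concern you raise pertains instead to the completeness of $B_{\mu}$ itself, which the paper asserts separately and is not part of this lemma.
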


Let $P=(P_1,P_2): \Gamma \to X$ be
\begin{eqnarray*}
P_{1}(\phi ,\psi )(t) &=&\int_{\mathbb{R}}\phi (y)e^{r_{1}(1-\phi (y)- \sum_{i=1}^{m}a_{i}\phi (y-ci)-\sum_{i=0}^{m}b_{i}\psi (y-ci))}k_{1}(t-c-y)dy, \\
P_{2}(\phi ,\psi )(t) &=&\int_{\mathbb{R}}\psi (y)e^{r_{2}(1-\psi (y)-\sum_{i=1}^{m}e_{i}\psi(y-ci)-\sum_{i=0}^{m}f_{i}\phi(y-ci))}k_{2}(t-c-y)dy
\end{eqnarray*}
for $(\phi, \psi)\in \Gamma, t\in\mathbb{R}.$ Then  a fixed point of $P$ in $X$ is a solution to \eqref{2}. In what follows, we shall prove the existence of the fixed points of $P$ by Schauder's fixed point theorem.

\begin{lemma}\label{le2.3}
If $\rho >1$ is large, then $P: \Gamma \to \Gamma.$
\end{lemma}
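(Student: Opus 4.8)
The plan is to verify the two inequalities $\underline{\phi}\le P_1(\phi,\psi)\le \overline{\phi}$ and $\underline{\psi}\le P_2(\phi,\psi)\le \overline{\psi}$ for every $(\phi,\psi)\in\Gamma$, which by symmetry reduces to the two estimates for $P_1$. Throughout I would freely use that for $(\phi,\psi)\in\Gamma$ one has $0\le\phi(y)\le l_1$ and $0\le\psi(y)\le l_2$, and that the scalar map $x\mapsto xe^{r_1(1-x)}$ is increasing on $[0,l_1]$ with range in $[0,l_1]$; together with the nonnegativity of all coefficients $a_i,b_i$ and the normalization $\int_{\mathbb{R}}k_1=1$, this is what replaces the (possibly unavailable) comparison principle at the level of a single iteration.

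For the upper estimate, first bound the exponent from above by dropping the nonnegative competition terms: since $\phi(y)\ge 0$, $a_i\phi(y-ci)\ge 0$, $b_i\psi(y-ci)\ge 0$, we get $\phi(y)e^{r_1(1-\phi(y)-\cdots)}\le \phi(y)e^{r_1(1-\phi(y))}\le l_1$, hence $P_1(\phi,\psi)(t)\le l_1\int_{\mathbb{R}}k_1(t-c-y)dy=l_1$. For the competing bound $P_1(\phi,\psi)(t)\le e^{\lambda_1 t}$, use $\phi(y)\le \overline{\phi}(y)\le e^{\lambda_1 y}$ together with $x e^{r_1(1-x)}\le e^{r_1}x$ on $[0,l_1]$ (and again discard the competition terms) to obtain $P_1(\phi,\psi)(t)\le e^{r_1}\int_{\mathbb{R}}e^{\lambda_1 y}k_1(t-c-y)dy = e^{r_1} e^{\lambda_1(t-c)}\int_{\mathbb{R}}e^{\lambda_1 z}k_1(z)dz = e^{\lambda_1 t}\Delta_1(\lambda_1,c)=e^{\lambda_1 t}$, using $\Delta_1(\lambda_1(c),c)=1$. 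Taking the minimum gives $P_1(\phi,\psi)(t)\le\overline{\phi}(t)$.

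The lower estimate is the main obstacle, and it is where the constant $\rho>1$ and the exponent $\eta\in(1,2)$ enter. When $e^{\lambda_1 t}-\rho e^{\eta\lambda_1 t}\le 0$ there is nothing to prove since $P_1\ge 0$, so assume $t$ is in the region where $\underline{\phi}(t)=e^{\lambda_1 t}-\rho e^{\eta\lambda_1 t}>0$, which forces $t$ to be very negative and hence all of $\phi(y),\psi(y)$ small for $y$ near $t$. The idea is to bound the exponent from below: for $y$ in the relevant range, $\phi(y)\le e^{\lambda_1 y}$ and $\psi(y)\le e^{\lambda_2 y}$ and $\phi(y-ci)\le e^{\lambda_1(y-ci)}\le e^{\lambda_1 y}$, $\psi(y-ci)\le e^{\lambda_2(y-ci)}\le e^{\lambda_2 y}$ (using $c,\lambda_j\ge 0$), so $1-\phi(y)-\sum a_i\phi(y-ci)-\sum b_i\psi(y-ci)\ge 1-C_1 e^{\lambda_1 y}-C_2 e^{\lambda_2 y}$ for explicit constants $C_1=1+\sum a_i$, $C_2=\sum b_i$. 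Using $e^{r_1 s}\ge 1+r_1 s$ (or, more simply, $e^{r_1 s}\ge 1-r_1(C_1 e^{\lambda_1 y}+C_2 e^{\lambda_2 y})$ after factoring $e^{r_1}\ge 1$ is not needed since $r_1\le 1$ is not assumed here—so I instead use $e^{r_1(1-u)}\ge 1-r_1 u$ for $u\ge 0$) and $\phi(y)\ge e^{\lambda_1 y}-\rho e^{\eta\lambda_1 y}$, one bounds the integrand below by $(e^{\lambda_1 y}-\rho e^{\eta\lambda_1 y})(1-r_1 C_1 e^{\lambda_1 y}-r_1 C_2 e^{\lambda_2 y})\ge e^{\lambda_1 y}-\rho e^{\eta\lambda_1 y}-r_1 C_1 e^{2\lambda_1 y}-r_1 C_2 e^{(\lambda_1+\lambda_2)y}$. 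Integrating against $k_1(t-c-y)dy$ and using (A2) to evaluate each exponential moment produces $e^{\lambda_1 t}\Delta_1(\lambda_1,c)-\rho e^{\eta\lambda_1 t}\Delta_1(\eta\lambda_1,c)-\text{(higher-order terms)} = e^{\lambda_1 t}-\rho e^{\eta\lambda_1 t}\Delta_1(\eta\lambda_1,c)-M_1 e^{2\lambda_1 t}-M_2 e^{(\lambda_1+\lambda_2)t}$ for constants $M_1,M_2$ depending on $k_1$ and the coefficients. Since $\eta\lambda_1<2\lambda_1$ and $\eta\lambda_1<\lambda_1+\lambda_2$ by the choice of $\eta$, and since $\Delta_1(\eta\lambda_1,c)<1$, the dominant negative term $\rho e^{\eta\lambda_1 t}$ we must produce has a strictly smaller exponent than the error terms $e^{2\lambda_1 t},e^{(\lambda_1+\lambda_2)t}$; hence for $t$ negative enough the errors are absorbed, and what remains is to choose $\rho$ large enough (depending on $M_1,M_2,\Delta_1(\eta\lambda_1,c)$ and the threshold $t$) that $e^{\lambda_1 t}-\rho e^{\eta\lambda_1 t}\Delta_1(\eta\lambda_1,c)-M_1 e^{2\lambda_1 t}-M_2 e^{(\lambda_1+\lambda_2)t}\ge e^{\lambda_1 t}-\rho e^{\eta\lambda_1 t}$ on that region, i.e. $\rho e^{\eta\lambda_1 t}(1-\Delta_1(\eta\lambda_1,c))\ge M_1 e^{2\lambda_1 t}+M_2 e^{(\lambda_1+\lambda_2)t}$, which holds for all sufficiently negative $t$ once $\rho$ is fixed large because the exponents on the right exceed $\eta\lambda_1$. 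I would carry this out carefully: first fix the threshold $t_0$ (the point where $e^{\lambda_1 t}=\rho e^{\eta\lambda_1 t}$ is not quite the right cutoff since $\rho$ is still free, so instead fix any $t_0$ with $e^{(2\lambda_1-\eta\lambda_1)t_0}$ and $e^{(\lambda_1+\lambda_2-\eta\lambda_1)t_0}$ small relative to $1-\Delta_1(\eta\lambda_1,c)$), then choose $\rho$ large enough to dominate on $(-\infty,t_0]$, noting that on $[t_0,\infty)$ the function $e^{\lambda_1 t}-\rho e^{\eta\lambda_1 t}$ is already negative for $\rho$ large so the lower bound is trivial there. The symmetric argument for $P_2$ uses (A2) for $k_2$, $\Delta_2(\eta\lambda_2,c)<1$, and $\eta\lambda_2<\lambda_1+\lambda_2$, $\eta\lambda_2<2\lambda_2$, and one then takes $\rho$ to be the maximum of the two required values. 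Finally, $P_1(\phi,\psi)$ and $P_2(\phi,\psi)$ are bounded and uniformly continuous (the latter from continuity of the translation in $L^1$ of $k_i$ together with the uniform bounds), so $P(\Gamma)\subset X$, completing the proof.
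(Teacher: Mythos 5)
Your upper-bound estimates for $P_1,P_2$ match the paper's, but the lower-bound step as written contains a genuine flaw: the linear factor $e^{r_1}$ is lost. By invoking $e^{r_1(1-u)}\ge 1-r_1u$, your pointwise lower bound for the integrand is $e^{\lambda_1 y}-\rho e^{\eta\lambda_1 y}-r_1C_1e^{2\lambda_1 y}-r_1C_2e^{(\lambda_1+\lambda_2)y}$, with no $e^{r_1}$ multiplying the first two terms; but then the leading term integrates to $\int_{\mathbb{R}}e^{\lambda_1 y}k_1(t-c-y)\,dy=e^{-r_1}\Delta_1(\lambda_1,c)e^{\lambda_1 t}=e^{-r_1}e^{\lambda_1 t}$, not to $\Delta_1(\lambda_1,c)e^{\lambda_1 t}=e^{\lambda_1 t}$ as you assert (recall that $\Delta_1$ contains $e^{r_1}$ by definition). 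The resulting deficit $(1-e^{-r_1})e^{\lambda_1 t}$ decays strictly more slowly than $\rho e^{\eta\lambda_1 t}$ as $t\to-\infty$ because $\eta>1$, so no choice of $\rho$ and no threshold $t_0$ can absorb it: with the bound you actually derived, $P_1(\phi,\psi)(t)\ge\underline{\phi}(t)$ does not follow. The repair is precisely to retain the linear factor, e.g. $e^{r_1(1-u)}=e^{r_1}e^{-r_1u}\ge e^{r_1}(1-r_1u)$, or the paper's estimate $|ue^{r_1(1-u-v-w)}-ue^{r_1}|\le Le^{r_1}(u^2+uv+uw)$ on the relevant box; this yields $\phi(y)e^{r_1(1-u)}\ge e^{r_1}\bigl(e^{\lambda_1 y}-\rho e^{\eta\lambda_1 y}\bigr)-r_1e^{r_1}\bigl(C_1e^{2\lambda_1 y}+C_2e^{(\lambda_1+\lambda_2)y}\bigr)$, whose leading part integrates to exactly $e^{\lambda_1 t}-\rho\Delta_1(\eta\lambda_1,c)e^{\eta\lambda_1 t}$, and the argument then closes as in the paper.

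Two smaller points. The intermediate product bound $(e^{\lambda_1 y}-\rho e^{\eta\lambda_1 y})(1-r_1C_1e^{\lambda_1 y}-r_1C_2e^{\lambda_2 y})$ is not a valid pointwise lower bound for the integrand at every $y$ (where the second factor is negative and $\phi(y)$ is near zero with $e^{\lambda_1 y}<\rho e^{\eta\lambda_1 y}$, the product is positive while $\phi(y)e^{r_1(1-u)}$ can be arbitrarily small); distribute first, bounding $\phi(y)\ge e^{\lambda_1 y}-\rho e^{\eta\lambda_1 y}$ in the linear term and $\phi(y)u\le C_1e^{2\lambda_1 y}+C_2e^{(\lambda_1+\lambda_2)y}$ in the quadratic term, which is what your expanded expression amounts to. Also, once $e^{r_1}$ is restored, your two-step selection of $t_0$ and then $\rho$ is unnecessary: wherever $\underline{\phi}(t)>0$ one automatically has $t<0$, and since $2\lambda_1>\eta\lambda_1$ and $\lambda_1+\lambda_2>\eta\lambda_1$ give $e^{2\lambda_1 t}\le e^{\eta\lambda_1 t}$ and $e^{(\lambda_1+\lambda_2)t}\le e^{\eta\lambda_1 t}$ for $t\le 0$, the single explicit choice $\rho\ge 1+\bigl[L(1+\sum_{i=1}^{m}a_i)\Delta_1(2\lambda_1,c)+L\sum_{i=0}^{m}b_i\Delta_1(\lambda_1+\lambda_2,c)\bigr]/\bigl[1-\Delta_1(\eta\lambda_1,c)\bigr]$ (and its analogue for $P_2$) works uniformly, as in the paper. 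Finally, the side remark that $x\mapsto xe^{r_1(1-x)}$ is increasing on $[0,l_1]$ is false when $r_1>1$ (the critical point is $1/r_1<l_1$), but you only use the correct facts $xe^{r_1(1-x)}\le l_1$ and $xe^{r_1(1-x)}\le e^{r_1}x$ on $[0,l_1]$, so this does not affect the upper bound.
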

\begin{proof}
For $(\phi, \psi) \in \Gamma,$ it is clear that $P_1$ is nonincreasing in $\psi$ and
\begin{eqnarray*}
P_{1}(\phi ,\psi )(t) &=&\int_{\mathbb{R}}\phi (y)e^{r_{1}(1-\phi (y)- \sum_{i=1}^{m}a_{i}\phi (y-ci)-\sum_{i=0}^{m}b_{i}\psi (y-ci))}k_{1}(t-c-y)dy \\
&\leq &\int_{\mathbb{R}}\phi (y)e^{r_{1}(1-\phi (y))}k_{1}(t-c-y)dy.
\end{eqnarray*}%
Note that $ue^{r_{1}(1-u)}\in \lbrack 0,l_{1}]$ for $u\in \lbrack 0,l_{1}],$ then%
\[
\int_{\mathbb{R}}\phi (y)e^{r_{1}(1-\phi (y))}k_{1}(t-c-y)dy\leq l_{1}
\]%
is clear for $\left( \phi ,\psi \right) $ $\in \Gamma .$ Furthermore, if $%
\left( \phi ,\psi \right) $ $\in \Gamma ,$ then
\begin{eqnarray*}
P_{1}(\phi ,\psi )(t) &\leq &\int_{\mathbb{R}}\phi (y)e^{r_{1}(1-\phi
(y))}k_{1}(t-c-y)dy \\
&\leq &e^{r_{1}}\int_{\mathbb{R}}\phi (y)k_{1}(t-c-y)dy \\
&\leq &e^{r_{1}}\int_{\mathbb{R}}e^{\lambda _{1}y}k_{1}(t-c-y)dy \\
&=&e^{\lambda _{1}t}
\end{eqnarray*}%
by Lemma \ref{le2.1}. Thus, we have proved that
\[
P_{1}(\phi ,\psi )(t)\leq \overline{\phi }(t),\text{ }(\phi, \psi) \in \Gamma,\,\, t\in \mathbb{R}.
\]%
Similarly, we can obtain
\[
P_{2}(\phi ,\psi )(t)\leq \overline{\psi }(t),\text{ }(\phi, \psi) \in \Gamma,\,\,t\in \mathbb{R}.
\]

If $\underline{\phi}(t)=0,$ then it is clear that
\[
P_{1}(\phi ,\psi )(t)\geq 0= \underline{\phi }(t).
\]
We now consider $t<0$ such that $\underline{\phi}(t)>0.$ Clearly, there exists $L>0$ such that
\begin{eqnarray*}
| ue^{r_{1}(1-u-v-w)}-ue^{r_{1}}|  &\leq
&Le^{r_{1}}\left( u^{2}+uv+uw\right) , \\
(u,v,w) &\in &\left[ 0,l_{1}\right] \times \left[ 0,\left(
1+\sum_{i=1}^{m}a_{i}\right) l_{1}\right] \times \left[ 0,%
\sum_{i=0}^{m}b_{i}l_{2}\right] ,
\end{eqnarray*}
and
\begin{eqnarray*}
&&P_{1}(\phi ,\psi )(t) \\
&=&\int_{\mathbb{R}}\phi (y)e^{r_{1}\left( 1-\phi
(y)-\sum_{i=1}^{m}a_{i}\phi (y-ci)-\sum_{i=0}^{m}b_{i}\psi (y-ci)\right)
}k_{1}(t-c-y)dy \\
&\geq &\int_{\mathbb{R}}\phi (y)e^{r_{1}}k_{1}(t-c-y)dy \\
&&-Le^{r_{1}}\int_{\mathbb{R}}\left[ \phi ^{2}(y)+\phi
(y)\sum_{i=1}^{m}a_{i}\phi (y-ci)+\phi (y)\sum_{i=0}^{m}b_{i}\psi (y-ci)%
\right] k_{1}(t-c-y)dy \\
&\geq &\int_{\mathbb{R}}\left( e^{\lambda _{1}y}-\rho e^{\eta \lambda
_{1}y}\right) e^{r_{1}}k_{1}(t-c-y)dy \\
&&-Le^{r_{1}}\int_{\mathbb{R}}\left[ \left( 1+\sum_{i=1}^{m}a_{i}\right)
e^{2\lambda _{1}y}+\sum_{i=0}^{m}b_{i}e^{(\lambda _{1}+\lambda _{2})y}\right]
k_{1}(t-c-y)dy \\
&=&\Delta _{1}(\lambda _{1},c)e^{\lambda _{1}t}-\rho \Delta _{1}(\eta
\lambda _{1},c)e^{\eta \lambda _{1}t} \\
&&-L\left( 1+\sum_{i=1}^{m}a_{i}\right) \Delta _{1}(2\lambda
_{1},c)e^{2\lambda _{1}t}-L\sum_{i=0}^{m}b_{i}\Delta _{1}(\lambda
_{1}+\lambda _{2},c)e^{(\lambda _{1}+\lambda _{2})t} \\
&=&e^{\lambda _{1}t}-\rho \Delta _{1}(\eta \lambda _{1},c)e^{\eta \lambda
_{1}t} \\
&&-L\left( 1+\sum_{i=1}^{m}a_{i}\right) \Delta _{1}(2\lambda
_{1},c)e^{2\lambda _{1}t}-L\sum_{i=0}^{m}b_{i}\Delta _{1}(\lambda
_{1}+\lambda _{2},c)e^{(\lambda _{1}+\lambda _{2})t}.
\end{eqnarray*}
Note that $\rho >1$ and $t<0,$ we see that%
\[
P_{1}(\phi ,\psi )(t)\geq \underline{\phi }(t),  \text{ }(\phi, \psi) \in \Gamma,\,\, t\in \mathbb{R}
\]%
provided that
\[
\rho \geq 1+\frac{L\left( 1+\sum_{i=1}^{m}a_{i}\right) \Delta _{1}(2\lambda
_{1},c)+L\sum_{i=0}^{m}b_{i}\Delta _{1}(\lambda _{1}+\lambda _{2},c)}{%
1-\Delta _{1}(\eta \lambda _{1},c)}.
\]

In a similar way,  if $\rho >1$ is large, then
\[
P_{2}(\phi ,\psi )(t)\geq \underline{\psi }(t),\text{ }(\phi, \psi) \in \Gamma,\,\, t\in \mathbb{R}.
\]

By what we have done, we complete the proof.
\end{proof}

\begin{lemma}\label{le2.4}
Assume that $\mu <\min\{\lambda_1,\lambda_2\}.$ Then $P: \Gamma \to \Gamma$ is complete continuous with respect to the  norm $\| \cdot \| _{\mu}.$
\end{lemma}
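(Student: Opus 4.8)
The plan is to verify the two ingredients of compactness separately: that $P$ is continuous on $(\Gamma,\|\cdot\|_\mu)$, and that $P(\Gamma)$ is precompact in $(\Gamma,\|\cdot\|_\mu)$. For continuity, suppose $(\phi_j,\psi_j)\to(\phi,\psi)$ in $\|\cdot\|_\mu$. Since all functions lie in $\Gamma$, they are uniformly bounded by $(l_1,l_2)$ and the exponents appearing in $P_1,P_2$ range over a fixed compact set, so the integrand maps $(u,v,w)\mapsto ue^{r_1(1-u-v-w)}$ (and its analogue for $P_2$) is Lipschitz there. One then estimates $\|P_i(\phi_j,\psi_j)-P_i(\phi,\psi)\|_\mu$ by splitting the difference of integrands into the finitely many terms $\phi_j(y)-\phi(y)$, $\phi_j(y-ci)-\phi(y-ci)$, $\psi_j(y-ci)-\psi(y-ci)$; each is controlled by the corresponding decay-norm distance multiplied by $e^{\mu|y|}$ or $e^{\mu|y-ci|}$, and after multiplying by $k_i(t-c-y)e^{-\mu|t|}$ and integrating, assumption (A2) (finiteness of all exponential moments of $k_i$) gives a finite constant independent of $t$. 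Hence $P$ is continuous.

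For precompactness, the standard route is to apply the Arzelà–Ascoli theorem adapted to the weighted space $B_\mu(\mathbb{R},\mathbb{R}^2)$: a set is precompact in $\|\cdot\|_\mu$ if it is uniformly bounded, equicontinuous on every compact interval, and has uniformly small "tails," i.e.\ $\sup\{\|\Phi(t)\|e^{-\mu|t|}: |t|\ge N\}\to 0$ uniformly as $N\to\infty$. Uniform boundedness of $P(\Gamma)$ follows from Lemma 2.9 (we already know $P:\Gamma\to\Gamma$, and $\Gamma$ is bounded). The tail condition is immediate because $P(\Gamma)\subset\Gamma$ and for $(\phi,\psi)\in\Gamma$ we have $\|(\phi,\psi)(t)\|e^{-\mu|t|}\le (l_1+l_2)e^{-\mu|t|}$ for $t\ge0$ and $\|(\phi,\psi)(t)\|e^{-\mu|t|}\le \bigl(e^{\lambda_1 t}+e^{\lambda_2 t}\bigr)e^{\mu t}\to 0$ as $t\to-\infty$ since $\mu<\min\{\lambda_1,\lambda_2\}$; this is exactly where the hypothesis on $\mu$ is used. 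For equicontinuity on a compact interval one differentiates (or forms difference quotients of) $P_i(\phi,\psi)(t)$ in $t$: only the kernel depends on $t$, so $|P_i(\phi,\psi)(t_1)-P_i(\phi,\psi)(t_2)|\le l_i e^{r_i}\int_{\mathbb{R}}|k_i(t_1-c-y)-k_i(t_2-c-y)|\,dy$, and continuity in $L^1$ of translation (valid for the integrable $k_i$ of (A1)) makes this small uniformly over $\Gamma$ when $|t_1-t_2|$ is small. Combining these three facts with Arzelà–Ascoli, every sequence in $P(\Gamma)$ has a subsequence converging uniformly on compacta; the tail estimate then upgrades this to convergence in $\|\cdot\|_\mu$.

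The one point that needs a little care — and the likeliest obstacle — is that $\|\cdot\|_\mu$ is a weighted sup norm, so uniform-on-compacta convergence alone does not give $\|\cdot\|_\mu$-convergence; one must genuinely invoke the uniform tail decay. This is why the hypothesis $\mu<\min\{\lambda_1,\lambda_2\}$ is essential: it forces the weight $e^{-\mu|t|}$ to beat the growth (in fact decay) of elements of $\Gamma$ as $t\to-\infty$, while as $t\to+\infty$ boundedness by $(l_1,l_2)$ together with $e^{-\mu t}\to0$ handles the other tail. Everything else is a routine application of the Lipschitz bound on the nonlinearity together with properties (A1)–(A2) of the kernels, so I would present the tail/equicontinuity argument in full and merely sketch the Lipschitz estimates.
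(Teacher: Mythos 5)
Your argument is correct and follows essentially the same route as the source the paper relies on: the paper gives no proof of this lemma, deferring to Lin \cite[Lemma 3.4]{lin}, whose argument is exactly the one you outline (continuity in $\|\cdot\|_\mu$ from the Lipschitz bound on the nonlinearity over the bounded region $[0,l_1]\times[0,l_2]$ together with the exponential moment condition (A2), and precompactness from uniform boundedness, equicontinuity via $L^1$-continuity of translations of $k_i$, and uniformly small weighted tails, assembled by Arzel\`a--Ascoli).

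One small correction to your commentary: the tail estimate is not where $\mu<\min\{\lambda_1,\lambda_2\}$ is genuinely needed. Elements of $\Gamma$ are uniformly bounded by $(l_1,l_2)$ on all of $\mathbb{R}$ and satisfy $\phi(t)\le e^{\lambda_1 t}$, $\psi(t)\le e^{\lambda_2 t}$ for $t\le 0$, so $\|(\phi,\psi)(t)\|e^{-\mu|t|}\to 0$ as $|t|\to\infty$ for \emph{every} $\mu>0$ (in your own displayed bound $\bigl(e^{\lambda_1 t}+e^{\lambda_2 t}\bigr)e^{\mu t}\to 0$ as $t\to-\infty$ regardless of the size of $\mu$). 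The restriction on $\mu$ in the statement is inherited from the framework of \cite{lin} rather than being the crux of this compactness step, so calling it ``essential'' there is a misattribution; it does not, however, affect the validity of your proof.
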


The proof is provided by Lin \cite[Lemma 3.4]{lin} and we omit it here.

Applying Schauder's fixed point theorem, we can obtain the following result.
\begin{theorem}\label{th1}
Assume that $c>c^*.$ Then \eqref{2} has a positive solution $(\phi, \psi)$ such that
\[
\phi(t)>0, \,\psi(t)>0, \,t\in\mathbb{R}
\]
and
\[
\lim_{t\to -\infty}(\phi(t),\psi(t))=(0,0).
\]
\end{theorem}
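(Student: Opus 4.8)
\textit{Proof proposal.}
The plan is to feed Lemmas \ref{le2.1}--\ref{le2.4} into Schauder's fixed point theorem and then upgrade the resulting fixed point to a strictly positive solution of \eqref{2}. Fix $c>c^{*}$. By Lemma \ref{le2.1} the exponents $\lambda_{1}=\lambda_{1}(c)$, $\lambda_{2}=\lambda_{2}(c)$ and the constant $\eta\in(1,2)$ entering the definitions of $\overline{\phi},\overline{\psi},\underline{\phi},\underline{\psi}$ and hence of $\Gamma$ are available; choose once and for all $\mu\in(0,\min\{\lambda_{1},\lambda_{2}\})$, so that $\Gamma\subset B_{\mu}(\mathbb{R},\mathbb{R}^{2})$. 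By Lemma \ref{le2.3} fix $\rho>1$ large enough that $P(\Gamma)\subseteq\Gamma$. By Lemma \ref{le2.2}, $\Gamma$ is a nonempty, closed, bounded, convex subset of the Banach space $\big(B_{\mu}(\mathbb{R},\mathbb{R}^{2}),\|\cdot\|_{\mu}\big)$, and by Lemma \ref{le2.4}, $P:\Gamma\to\Gamma$ is completely continuous with respect to $\|\cdot\|_{\mu}$; since $\Gamma$ is bounded, $\overline{P(\Gamma)}$ is compact. Schauder's fixed point theorem then provides $(\phi,\psi)\in\Gamma$ with $P(\phi,\psi)=(\phi,\psi)$. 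Replacing $t$ by $t+c$ in the identities $\phi(t)=P_{1}(\phi,\psi)(t)$ and $\psi(t)=P_{2}(\phi,\psi)(t)$ shows that $(\phi,\psi)$ solves \eqref{2}.

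It remains to verify the two qualitative properties. Since $(\phi,\psi)\in\Gamma$ we have $0\le\phi(t)\le\overline{\phi}(t)\le e^{\lambda_{1}t}$ and $0\le\psi(t)\le\overline{\psi}(t)\le e^{\lambda_{2}t}$ for all $t$, which at once gives $\lim_{t\to-\infty}(\phi(t),\psi(t))=(0,0)$; moreover $\phi(t)\ge\underline{\phi}(t)>0$ and $\psi(t)\ge\underline{\psi}(t)>0$ for all sufficiently negative $t$ (namely $t<-\ln\rho/((\eta-1)\lambda_{i})$). To get strict positivity for \emph{every} $t$ I would run a ``first zero'' argument: put $t_{0}=\sup\{s\in\mathbb{R}:\phi>0\text{ on }(-\infty,s)\}$, which is well defined by the previous sentence; if $t_{0}=+\infty$ we are done. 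If $t_{0}$ is finite, continuity forces $\phi(t_{0})=0$ while $\phi>0$ on $(-\infty,t_{0})$, and then \eqref{2} gives
\[
0=\phi(t_{0})=\int_{\mathbb{R}}\phi(y)\,e^{r_{1}(1-\phi(y)-\sum_{i=1}^{m}a_{i}\phi(y-ci)-\sum_{i=0}^{m}b_{i}\psi(y-ci))}k_{1}(t_{0}-c-y)\,dy .
\]
The integrand is nonnegative, with $\phi(y)>0$ and the exponential factor positive for $y<t_{0}$, so necessarily $k_{1}(t_{0}-c-y)=0$ for a.e. $y<t_{0}$, i.e. $k_{1}\equiv0$ a.e. on $(-c,\infty)$; by the evenness of $k_{1}$ in (A2) this forces $k_{1}\equiv0$ a.e. on $(-\infty,c)$ as well, hence (since $c>0$, so $(-c,\infty)\cup(-\infty,c)=\mathbb{R}$) $k_{1}\equiv0$ a.e. on $\mathbb{R}$, contradicting $\int_{\mathbb{R}}k_{1}=1$ in (A1). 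Thus $\phi(t)>0$ for all $t$, and the same argument applied to $\psi$ with $k_{2}$ and (A1)--(A2) gives $\psi(t)>0$ for all $t$.

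The genuinely new content beyond the cited lemmas is light: it is the positivity-propagation step. The only mild subtlety there is that (A1)--(A2) do not force $k_{i}$ to be positive everywhere, so one cannot simply assert that the convolution of a nonnegative continuous function which is positive somewhere against $k_{i}$ is positive; the first-zero argument above avoids this, using only the symmetry of $k_{i}$ and $c>0$. Everything else is bookkeeping --- the shift turning a fixed point of $P$ into a solution of \eqref{2}, and reading off the decay at $-\infty$ directly from membership in $\Gamma$.
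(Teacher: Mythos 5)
Your proposal is correct and follows essentially the same route the paper takes (and delegates to Lin's Theorem 3.5): Schauder's fixed point theorem applied to $P$ on $\Gamma$ via Lemmas \ref{le2.2}--\ref{le2.4}, with the decay at $-\infty$ and positivity read off from the bounds defining $\Gamma$. Your first-zero argument for strict positivity, using only the symmetry of $k_i$ in (A2), $\int_{\mathbb{R}}k_i=1$ in (A1) and $c>0$, correctly supplies the one detail the paper leaves to the cited reference.
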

\begin{remark}{\rm
The proof of Theorem \ref{th1} is similar to Lin \cite[Theorem 3.5]{lin}.}
\end{remark}

\section{Asymptotic of Traveling Wave Solutions}
\noindent

In this part, we shall investigate $\lim_{t\to\infty}(\phi(t), \psi(t)),$ in which $(\phi, \psi)$ is given by Theorem \ref{th1} and satisfies $\lim_{t\to -\infty}(\phi(t),\psi(t))=(0,0).$

Firstly, we consider
\begin{equation}\label{01}
\begin{cases}
X_{n+1}=X_{n}e^{r_{1}\left(
1-X_{n}-\sum_{i=1}^{m}a_{i}X_{n-i}-\sum_{i=0}^{m}b_{i}Y_{n-i}\right) },\\
Y_{n+1}=Y_{n}e^{r_{2}\left(
1-Y_{n}-\sum_{i=1}^{m}e_{i}Y_{n-i}-\sum_{i=0}^{m}f_{i}X_{n-i}\right) }.
\end{cases}
\end{equation}
\begin{definition}\label{squ}
{\rm
For $s\in [0,1]$ with
\[
R(s)=(r_1(s),r_2(s))\in \mathbb{R}^2, \,\, T(s)=(t_1(s),t_2(s))\in \mathbb{R}^2,
\]
$[R(s), T(s)]$ is a contracting rectangle of \eqref{01} if
\begin{description}
\item[(C1)] $r_i(s),t_i(s)$ are continuous in $s\in [0,1],i\in \{1,2\};$
\item[(C2)] $r_i(s)$ is strictly increasing in $s$ while $t_i(s)$ is strictly decreasing in $s\in [0,1],i\in\{1,2\}$;
\item[(C3)] $(0,0)\le (r_1(0),r_2(0))< (r_1(1),r_2(1))=(k_1,k_2)= (t_1(1),t_2(1))< (t_1(0),t_2(0))\le (2l_1,2l_2);$
\item[(C4)] for each $s\in (0,1)$
\[
r_1(s)< u_1^0 e^{r_1 (1-u_1^0 -\sum_{l=1}^{l=m}a_lu_1^l-\sum_{l=0}^m b_lu_2^l)}<t_1(s)
\]
and
\[
r_2(s)< u_2^0 e^{r_2 (1-u_2^0 -\sum_{l=1}^{l=m}e_lu_2^l-\sum_{l=0}^m f_lu_1^l)}<t_2(s)
\]
if $u^{l}_{i}\in [r_i(s),t_i(s)], l\in \{0,1,\cdots, m\}$.
\end{description}
}
\end{definition}
The definition was given by Lin \cite{lin}, by which the author obtained the stability of positive steady state of diference systems of higher order.
\begin{lemma}\label{ct}
If (A1)-(A5) hold and $\epsilon >0$ is small, then $[R(s), T(s)]$ is a contracting rectangle of \eqref{01}, where
\[
r_i(s)=sk_i, t_i(s)=sk_i+(1-s)(1+\epsilon),i=1,2.
\]
\end{lemma}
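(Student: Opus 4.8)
The plan is to verify, in turn, the four conditions (C1)--(C4) of Definition \ref{squ} for the proposed $R(s)=(sk_1,sk_2)$, $T(s)=(sk_1+(1-s)(1+\epsilon),\,sk_2+(1-s)(1+\epsilon))$. Conditions (C1)--(C3) are routine and I would dispatch them first. Each $r_i(s)$ and $t_i(s)$ is affine in $s$, hence continuous, which is (C1); $r_i(s)=sk_i$ is strictly increasing since $k_i>0$; from the defining relations for $(k_1,k_2)$ together with the nonnegativity of all competition coefficients one reads off $k_1\bigl(1+\sum_{i=1}^m a_i\bigr)+k_2\sum_{i=0}^m b_i=1$ with every summand $\ge0$, so $0<k_i\le1<1+\epsilon$; hence $t_i(s)=(1+\epsilon)+s(k_i-1-\epsilon)$ is strictly decreasing, which is (C2), and $t_i(1)=k_i<1+\epsilon=t_i(0)$. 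Combined with $r_i(0)=0$, $r_i(1)=k_i=t_i(1)$, and the fact that under (A5) we have $r_i\le1$, so $l_i=1$ and $t_i(0)=1+\epsilon\le2=2l_i$ for $\epsilon\le1$, the whole chain of inequalities in (C3) holds.

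The substance is (C4). Fix $s\in(0,1)$ and, for $u_i^l\in[sk_i,\,sk_i+(1-s)(1+\epsilon)]$ with $l\in\{0,\dots,m\}$, set $G_1=u_1^0\exp\!\bigl(r_1(1-u_1^0-\sum_{i=1}^m a_iu_1^i-\sum_{i=0}^m b_iu_2^i)\bigr)$, with $G_2$ defined symmetrically; I must prove $sk_1<G_1<t_1(s)$ and the analogous bound for $G_2$, which is identical after interchanging the roles. The algebraic input throughout is the identity $\sum_{i=1}^m a_ik_1+\sum_{i=0}^m b_ik_2=1-k_1$ (and its twin), read off from the definition of $k_1$. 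Since $G_1$ depends on $u_1^1,\dots,u_1^m,u_2^0,\dots,u_2^m$ only through the negative term $-\sum_{i=1}^m a_iu_1^i-\sum_{i=0}^m b_iu_2^i$ in the exponent, it is non-increasing in each of these variables. For the upper bound I therefore lower them to $sk_1$ (resp. $sk_2$); the exponent then collapses, via the identity, to $r_1(1-s(1-k_1)-u_1^0)$, so $G_1\le\phi(u_1^0)$ with $\phi(u)=u\,e^{r_1(1-s+sk_1-u)}$. As $\phi$ is unimodal with maximum at $u=1/r_1\ge1$, I split on whether $t_1(s)\le1/r_1$: in that case $\max_{[sk_1,t_1(s)]}\phi=\phi(t_1(s))=t_1(s)e^{-r_1\epsilon(1-s)}<t_1(s)$; otherwise $\max\phi=\phi(1/r_1)=r_1^{-1}e^{r_1(1-s(1-k_1))-1}\le r_1^{-1}<t_1(s)$ because $1-s(1-k_1)\le1$ and $r_1\le1$. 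Either way $G_1<t_1(s)$.

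For the lower bound I reverse the monotonicity step, raising each $u_1^i$ ($i\ge1$) to $t_1(s)$ and each $u_2^i$ to $t_2(s)$, which gives $G_1\ge\chi(u_1^0)$ with $\chi(u)=u\,e^{r_1(1-s(1-k_1)-(1-s)(1+\epsilon)\sigma-u)}$, where $\sigma=\sum_{i=1}^m a_i+\sum_{i=0}^m b_i<1$ by (A5). As $\chi$ is again unimodal with peak at $1/r_1\ge1$, its minimum over $[sk_1,t_1(s)]$ is attained at an endpoint. At the left endpoint, $\chi(sk_1)=sk_1\,e^{r_1(1-s)(1-(1+\epsilon)\sigma)}>sk_1$ once $(1+\epsilon)\sigma<1$, i.e. for $\epsilon$ small, and this already finishes the case $t_1(s)\le1/r_1$, where $\chi$ increases on the whole interval. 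In the remaining case $t_1(s)>1/r_1$ — which forces $t_1(s)>1$, hence $s<\epsilon/(1+\epsilon-k_1)$ — I must show $\chi(t_1(s))=(sk_1+(1-s)(1+\epsilon))e^{-r_1(1-s)\kappa}>sk_1$ with $\kappa=(1+\epsilon)(1+\sigma)-1>0$; this is equivalent to $(1-s)(1+\epsilon)>sk_1(e^{r_1(1-s)\kappa}-1)$, and I would bound the right-hand side by $sk_1\,r_1(1-s)\kappa\,e^{r_1(1-s)\kappa}$ via $e^x-1\le xe^x$, then note that on the relevant $s$-range $sk_1\to0$ as $\epsilon\to0$ (treating $\sigma=0$, equivalently $k_1=1$, separately, where $\kappa=\epsilon$ makes the estimate immediate), while the left-hand side stays bounded away from $0$, so the inequality holds for $\epsilon$ small. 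Since every inequality above is strict, (C4) follows and the lemma is proved. I expect this last case of the lower bound — possible only when $r_1$ is near $1$ and $s$ near $0$ — to be the main obstacle, since there one cannot conclude from the sign of an exponent alone and must quantify the smallness of $sk_1$; the degenerate sub-case $k_1=1$ (forced exactly when $\sigma=0$, so that $t_1(s)>1$ for every $s<1$) has to be traced on its own, though $\sigma=0$ trivializes the estimate there.
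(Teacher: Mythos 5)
Your verification is correct: the paper itself omits the proof as ``trivial,'' and your argument is exactly the direct check of (C1)--(C4) from Definition \ref{squ} that is intended, using $k_1+\sum_{i=1}^m a_ik_1+\sum_{i=0}^m b_ik_2=1$ (and its twin) together with $r_i\le 1$, hence $l_i=1$ and the critical point $1/r_i\ge 1$. The only genuinely non-routine step, the lower bound at the right endpoint when $t_1(s)>1/r_1$ (so $s<\epsilon/(1+\epsilon-k_1)$), is handled soundly, including the uniform-in-$s$ choice of small $\epsilon$ and the separate case $\sigma=0$ (i.e. $k_1=1$).
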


The proof is trivial and we omit it here.

\begin{remark}{\rm
By Lin \cite[Theorem 4.2]{lin}, $(k_1,k_2)$ is asymptotically stable if (A1)-(A5) hold. Moreover, it is evident that $(0,0)$ is unstable if $r_1 >0, r_2 >0$.}
\end{remark}

\begin{lemma}
If (A1)-(A5) hold, then
\[
1\ge \limsup_{t\rightarrow \infty }\phi (t) \ge \liminf_{t\rightarrow \infty }\phi (t)>0
\]
and
\[
1\ge \limsup_{t\rightarrow \infty }\psi (t) \ge \liminf_{t\rightarrow \infty }\psi (t)>0.
\]
\end{lemma}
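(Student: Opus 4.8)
The plan is to split the claim into the two easy upper estimates and the two substantive lower estimates. For the upper bounds, recall that $(\phi,\psi)$ is the fixed point of $P$ produced in Theorem \ref{th1}, hence $(\phi,\psi)\in\Gamma$, so $\phi(t)\le\overline{\phi}(t)\le l_1$ and $\psi(t)\le\overline{\psi}(t)\le l_2$ for every $t\in\mathbb{R}$. Under (A5) we have $r_1,r_2\in(0,1]$, so $l_1=l_2=1$ by the definition of the $l_i$, and therefore $\limsup_{t\to\infty}\phi(t)\le 1$ and $\limsup_{t\to\infty}\psi(t)\le 1$ are immediate.

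For the lower bounds I would compare $\phi$ (resp.\ $\psi$) from below with the solution of an auxiliary \emph{scalar monotone} recursion of the form \eqref{mon} and then invoke Lemma \ref{com}. Write $X_n(x)=\phi(x+cn)$ and $Y_n(x)=\psi(x+cn)$; substituting $t=x+cn$ and changing variables $y\mapsto y+cn$ in \eqref{2} shows that $(X_n,Y_n)$ is a full-line solution of \eqref{1}. Put $A=\sum_{i=1}^{m}a_i+\sum_{i=0}^{m}b_i$, so $A<1$ by (A5), and set $b(u)=ue^{r_1(1-u-A)}$. Since $0\le X_n\le1$ and $0\le Y_n\le1$ by the upper-bound step, the exponent in the first equation of \eqref{1} is $\ge 1-X_n(y)-A$, whence
\[
X_{n+1}(x)\ \ge\ \int_{\mathbb{R}}b(X_n(y))\,k_1(x-y)\,dy,\qquad x\in\mathbb{R}.
\]
A direct check shows $b$ satisfies (B1)--(B4): (B1) holds with $u^+=1-A\in(0,1)$; (B3) holds with $b_0=\lim_{u\to0^+}b(u)/u=e^{r_1(1-A)}>1$, since $b(u)/u=e^{r_1(1-u-A)}<b_0$ for $u>0$; because $r_1\le1$ one has $b'(u)=e^{r_1(1-u-A)}(1-r_1u)\ge0$ on $[0,U^+]$ with $U^+=1/r_1\ (\ge1\ge u^+)$, giving (B2); and $b_0u-b(u)=ue^{r_1(1-A)}(1-e^{-r_1u})\le r_1e^{r_1(1-A)}u^2$ gives (B4). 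Now let $u_n$ solve \eqref{mon} with kernel $k_1$, nonlinearity $b$, and $u_0=\phi$, which is admissible because $0\le\phi\le1\le U^+$ and $\phi>0$ has nonempty support, and $0\le X_n\le1\le U^+$ as well. By Lemma \ref{com}(1)--(2), $0\le u_n(x)\le X_n(x)$ for all $n,x$; and by Lemma \ref{com}(3), for any $c'\in(0,c_0)$ with
\[
c_0=\inf_{\lambda>0}\frac{\ln\big(b_0\int_{\mathbb{R}}e^{\lambda y}k_1(y)\,dy\big)}{\lambda}>0,
\]
we obtain $\liminf_{n\to\infty}\inf_{|x|<c'n}X_n(x)\ge\liminf_{n\to\infty}\inf_{|x|<c'n}u_n(x)=u^+=1-A$. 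Finally I translate this back: $\inf_{|x|<c'n}X_n(x)=\inf_{t\in((c-c')n,(c+c')n)}\phi(t)$, and since $c'>0$ the intervals $((c-c')n,(c+c')n)$ overlap for all large $n$, so their union contains a half-line $(T_0,\infty)$; hence $\liminf_{t\to\infty}\phi(t)\ge 1-A>0$. The identical argument applied to the second equation of \eqref{1}, with $B=\sum_{i=1}^{m}e_i+\sum_{i=0}^{m}f_i<1$ and $b(u)=ue^{r_2(1-u-B)}$, gives $\liminf_{t\to\infty}\psi(t)\ge 1-B>0$.

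I expect the delicate points to be (i) checking that the one-species auxiliary map $b$ genuinely meets (B1)--(B4)—the monotonicity (B2) being precisely where the standing hypothesis $r_i\le1$ enters—and (ii) converting the spreading conclusion $\inf_{|x|<c'n}X_n\to u^+$ into the one-sided statement $\liminf_{t\to\infty}\phi(t)>0$. The latter rests on $c_0>0$, which holds because $\lambda^{-1}\ln\big(b_0\int_{\mathbb{R}}e^{\lambda y}k_1(y)\,dy\big)\to\infty$ as $\lambda\to0^+$: the integral is $\ge1$ by symmetry of $k_1$, so the numerator stays $\ge\ln b_0>0$ while the denominator tends to $0$. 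No use of the contracting rectangle is needed here; that refinement enters only in the sharper asymptotics obtained afterwards.
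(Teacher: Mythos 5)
Your proof is correct and follows essentially the same route as the paper: bound the delayed competition terms by $1$ (valid since $\phi,\psi\le l_i=1$ under (A5)), compare $X_n$, $Y_n$ from below with scalar monotone recursions of the form \eqref{mon}, and invoke Lemma \ref{com} to obtain the positive lower limits, the upper bounds being immediate from $(\phi,\psi)\in\Gamma$. The only cosmetic differences are that the paper also runs a second comparison with $ue^{r_1(1-u)}$ for the upper bound instead of quoting $\phi\le\overline{\phi}\le 1$ directly, while you verify (B1)--(B4) explicitly and spell out the passage from the spreading estimate to $\liminf_{t\to\infty}$ more carefully than the paper's one-line remark.
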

\begin{proof}
Clearly, $X_n(x)=\phi (t)\in (0,1), t\in\mathbb{R}$ satisfies
\[
\phi (t)\geq \int_{\mathbb{R}}\phi (y)e^{r_{1}(1-\phi
(y)-\sum_{i=1}^{m}a_{i}-\sum_{i=0}^{m}b_{i})}k_{1}(t-c-y)dy
\]%
and%
\[
\begin{cases}
X_{n+1}(x)\geq \int_{\mathbb{R}}X_{n}(y)e^{r_{1}(1-\sum_{i=1}^{m}a_{i}-%
\sum_{i=0}^{m}b_{i}-X_{n}(y))}k_{1}(x-y)dy,x\in\mathbb{R},n=0,1,2,\cdots,\\
X_{0}(x)=\phi (x),x\in\mathbb{R}.
\end{cases}
\]
Consider the following initial value problem
\[
\begin{cases}
Z_{n+1}(x)= \int_{\mathbb{R}}Z_{n}(y)e^{r_{1}(1-\sum_{i=1}^{m}a_{i}-%
\sum_{i=0}^{m}b_{i}-Z_{n}(y))}k_{1}(x-y)dy,x\in\mathbb{R},n=0,1,2,\cdots,\\
Z_{0}(x)=\phi (x),x\in\mathbb{R}.
\end{cases}
\]
It is evident that
\[
ze^{r_{1}(1-\sum_{i=1}^{m}a_{i}-%
\sum_{i=0}^{m}b_{i}-z)}
\]
is monotone increasing in $z\in [0,1].$
By Lemma \ref{com}, we see that
\[
\liminf_{n\to\infty} X_n(0)\ge 1-\sum_{i=1}^{m}a_{i}-%
\sum_{i=0}^{m}b_{i} >0
\]
and
\[
\liminf_{t\to\infty}\phi(t)\ge 1-\sum_{i=1}^{m}a_{i}-%
\sum_{i=0}^{m}b_{i} >0
\]
by the invariant wave profile $\phi(t)$ with $t=x+cn.$

At the same time, we have
\[
\begin{cases}
X_{n+1}(x)\leq \int_{\mathbb{R}%
}X_{n}(y)e^{r_{1}(1-X_{n}(y))}k_{1}(x-y)dy,x\in\mathbb{R},n=0,1,2,\cdots,\\
X_{0}(x)=\phi (x),x\in\mathbb{R}.
\end{cases}
\]
Then Lemma \ref{com} indicates that
\[
\limsup_{n\to\infty} X_n(0) \le 1
\]
and
\[
\limsup_{t\to\infty}\phi(t)\le 1.
\]

In a similar way, we can prove that
\[
1\ge \limsup_{t\rightarrow \infty }\psi (t) \ge \liminf_{t\rightarrow \infty }\psi (t)>0.
\]

The proof is complete.
\end{proof}
\begin{theorem}\label{th2}
Assume that (A1)-(A5) hold and $(\phi,\psi)$ is formulated by Theorem \ref{th1}. Then \eqref{kkkk} is true.
\end{theorem}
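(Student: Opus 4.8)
I would prove Theorem~\ref{th2} by a squeezing argument driven by the contracting rectangle of Lemma~\ref{ct}; this mirrors the asymptotic-stability proof of Lin \cite[Theorem 4.2]{lin}, and in fact, once the a priori bounds below are in place, one could alternatively just invoke the general asymptotic criterion for traveling waves in \cite{lin}. The behaviour at $-\infty$ required by \eqref{kkkk} is already part of Theorem~\ref{th1}, so only $\phi(t)\to k_1$ and $\psi(t)\to k_2$ as $t\to+\infty$ remain. Set $\phi_-=\liminf_{t\to\infty}\phi(t)$, $\phi_+=\limsup_{t\to\infty}\phi(t)$, and define $\psi_\pm$ analogously. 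By the lemma preceding this theorem, $0<\phi_-\le\phi_+\le1$ and $0<\psi_-\le\psi_+\le1$, so $B:=[\phi_-,\phi_+]\times[\psi_-,\psi_+]$ is a nonempty compact subset of $(0,l_1]\times(0,l_2]\subseteq[0,1+\epsilon]^2=[R(0),T(0)]$. It suffices to show $B=\{(k_1,k_2)\}$, for then both limits exist and equal $k_1,k_2$.

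For $s\in[0,1]$, $B\subseteq[R(s),T(s)]$ holds iff $sk_1\le\phi_-$, $sk_2\le\psi_-$, $\phi_+\le sk_1+(1-s)(1+\epsilon)$ and $\psi_+\le sk_2+(1-s)(1+\epsilon)$; by (C1)--(C2) the left sides are continuous and increasing in $s$, the right sides continuous and decreasing, so the set of admissible $s$ is an interval $[0,\sigma]$ with $\sigma\in[0,1]$. Since $\phi_-,\psi_->0$ and $\phi_+,\psi_+\le1<1+\epsilon$, all four inequalities are strict for all small $s>0$, hence $\sigma>0$. If $\sigma=1$ then $B\subseteq[R(1),T(1)]=\{(k_1,k_2)\}$ by (C3) and we are done, so assume for contradiction $\sigma\in(0,1)$; maximality of $\sigma$ forces one of the four inequalities to be an equality at $s=\sigma$, and, interchanging the two species and/or $\liminf$ with $\limsup$ (the whole set-up is symmetric under these), we may assume $\phi_+=\sigma k_1+(1-\sigma)(1+\epsilon)=t_1(\sigma)$.

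Write $F_1(u_1^0,\dots,u_1^m,u_2^0,\dots,u_2^m)=u_1^0 e^{r_1(1-u_1^0-\sum_{l=1}^{m}a_lu_1^l-\sum_{l=0}^{m}b_lu_2^l)}$. Because $\sigma\in(0,1)$, part (C4), together with compactness of $[R(\sigma),T(\sigma)]$ and continuity of $F_1$, supplies $\beta>0$ and an open neighbourhood $N$ of $[R(\sigma),T(\sigma)]$ in $\mathbb{R}^2$ with $F_1<t_1(\sigma)-\beta$ whenever $(u_1^l,u_2^l)\in N$ for $l=0,\dots,m$. Pick $t_n\to+\infty$ with $\phi(t_n+c)\to\phi_+$. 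By \eqref{2}, the change of variable $y=t_n+z$, and the symmetry of $k_1$ in (A2),
\[
\phi(t_n+c)=\int_{\mathbb{R}}\phi(t_n+z)\,e^{r_1(1-\phi(t_n+z)-\sum_{i=1}^{m}a_i\phi(t_n+z-ci)-\sum_{i=0}^{m}b_i\psi(t_n+z-ci))}\,k_1(z)\,dz.
\]
Split this at $|z|\le M$ and $|z|>M$. On the tail, $0\le\phi\le l_1$ and $ue^{r_1(1-u-v-w)}\le ue^{r_1(1-u)}\le l_1$ give $\int_{|z|>M}(\cdots)k_1(z)\,dz\le l_1\int_{|z|>M}k_1(z)\,dz=:\eta(M)\to0$. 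On $|z|\le M$, once $n$ is large enough that $t_n-M-cm$ exceeds the level beyond which $(\phi(t),\psi(t))$ stays in $N$, every slot $(\phi(t_n+z-ci),\psi(t_n+z-ci))$, $0\le i\le m$, lies in $N$, so the integrand there equals $F_1(\cdots)k_1(z)<(t_1(\sigma)-\beta)k_1(z)$ and its contribution is $<(t_1(\sigma)-\beta)\int_{|z|\le M}k_1(z)\,dz\le t_1(\sigma)-\beta$. Taking $M$ with $\eta(M)<\beta/2$ and letting $n\to\infty$ yields $\phi_+\le t_1(\sigma)-\beta/2<t_1(\sigma)=\phi_+$, a contradiction. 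The cases $\psi_+=t_2(\sigma)$, $\phi_-=r_1(\sigma)$, $\psi_-=r_2(\sigma)$ are handled identically, using the second equation of \eqref{2} and/or the left inequality of (C4) (then $F_1>r_1(\sigma)+\beta$ on $N$, one takes $t_n$ with $\phi(t_n+c)\to\phi_-$, and the tail of the convolution only helps, being nonnegative). Hence $\sigma=1$, $B=\{(k_1,k_2)\}$, and \eqref{kkkk} follows.

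The genuine difficulty should be the convolution estimate in the third paragraph: the bound coming from (C4) is available only after the wave has entered the neighbourhood $N$ of its $\omega$-limit set, whereas the recursion integrates $\phi$ over all of $\mathbb{R}$, so a tail must be split off and controlled crudely through $0\le\phi,\psi\le l_i$ and the integrability of $k_1$. A secondary point not to be missed is that $\sigma>0$ — this is exactly what makes the strict inequalities in (C4), stated only for $s\in(0,1)$, applicable at $s=\sigma$, and it rests on $\phi_-,\psi_->0$ together with $\phi_+,\psi_+\le1<1+\epsilon$.
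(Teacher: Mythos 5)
Your proposal is correct and follows the same route as the paper: the lemma preceding Theorem \ref{th2} gives $0<\liminf\le\limsup\le 1$ for both components, which places the behaviour of $(\phi,\psi)$ at $+\infty$ strictly inside a level $[R(s_0),T(s_0)]$ of the contracting rectangle from Lemma \ref{ct}, and the conclusion then follows by squeezing. The only difference is that the paper stops there and invokes Lin \cite[Theorem 4.3]{lin} for the squeezing step, whereas you reprove that step in situ (the maximal $\sigma$ plus the convolution-splitting estimate), which makes the argument self-contained but essentially reproduces the cited result.
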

\begin{proof}
By what we have done, there exists $s_0\in (0,1)$ such that
\[
t_{1}(s_{0})>\limsup_{t\rightarrow \infty }\phi (t)\geq
\liminf_{t\rightarrow \infty }\phi (t)>r_{1}(s_{0})>0
\]%
and
\[
t_{2}(s_{0})>\limsup_{t\rightarrow \infty }\psi (t)\geq
\liminf_{t\rightarrow \infty }\psi (t)>r_{1}(s_{0})>0,
\]
where $t_1(s),t_2(s),r_1(s),r_2(s)$ are defined by Lemma \ref{ct}. From Lin \cite[Theorem 4.3]{lin}, we complete the proof.
\end{proof}

In the above Theorem \ref{th2}, we have proved the asymptotic behavior \eqref{kkkk} when $r_1,r_2\in (0,1].$ We now investigate
the asymptotic boundary condition \eqref{a} for $r_1,r_2\in (0,\infty)$.
\begin{theorem}\label{th3}
Assume that (A1)-(A4) and
\begin{equation}\label{up}
\sum_{i=1}^{m}a_{i}l_1+\sum_{i=0}^{m}b_{i}l_2<1, \sum_{i=1}^{m}e_{i}l_2+\sum_{i=0}^{m}f_{i}l_1<1.
\end{equation}
If $(\phi,\psi)$ is formulated by Theorem \ref{th1}, then \eqref{a} is true.
\end{theorem}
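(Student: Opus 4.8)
The limit $\lim_{t\to-\infty}(\phi(t),\psi(t))=(0,0)$ is already delivered by Theorem \ref{th1}, so what remains is to establish the two one-sided bounds $\liminf_{t\to\infty}\phi(t)>0$ and $\liminf_{t\to\infty}\psi(t)>0$. Since $(\phi,\psi)\in\Gamma$ with $\phi,\psi>0$ on $\mathbb{R}$, I first record that $0<\phi(t)\le l_1$ and $0<\psi(t)\le l_2$ for all $t$. The plan is to bound $\phi$ from below, in the moving frame, by the solution of a \emph{monotone} scalar recursion of the type \eqref{mon}, and then to invoke the spreading conclusion of Lemma \ref{com}; the treatment of $\psi$ is identical with the two species interchanged.

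To build the scalar recursion I would use \eqref{up}. Set $\alpha_1=\sum_{i=1}^{m}a_il_1+\sum_{i=0}^{m}b_il_2$, so that $\alpha_1\in[0,1)$ by \eqref{up}. Bounding the competition arguments $\phi(y-ci),\psi(y-ci)$ in the first line of \eqref{2} by $l_1$ and $l_2$ gives
\[
\phi(t+c)\ \ge\ \int_{\mathbb{R}}g(\phi(y))\,k_1(t-y)\,dy,\qquad g(z):=z\,e^{r_1(1-\alpha_1-z)},\ z\in[0,l_1],
\]
and a direct check shows $g$ maps $[0,l_1]$ into itself. When $r_1\le1$ the map $g$ is increasing on $[0,l_1]$; when $r_1>1$ it is merely unimodal (increasing on $[0,1/r_1]$, decreasing on $[1/r_1,l_1]$). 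To handle the nonmonotone case uniformly I would replace $g$ by its monotone minorant $\underline g(z):=\min_{z\le w\le l_1}g(w)$, which is continuous, nondecreasing, lies between $0$ and $g$ on $[0,l_1]$, agrees with $g$ near $z=0$, and thus satisfies $\underline g_0:=\lim_{z\to0+}\underline g(z)/z=e^{r_1(1-\alpha_1)}>1$. Taking $U^+=l_1$, the key step --- and the one I expect to be the main obstacle --- is to verify that $\underline g$ satisfies (B1)--(B4) with a positive fixed point $\underline u_1^+\in(0,l_1)$: concretely, that $\underline g$ meets the diagonal exactly once on $(0,l_1)$, lying above it near $0$ and below it afterwards. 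This is exactly the place where \eqref{up} enters: $\alpha_1<1$ makes $g(z)/z=e^{r_1(1-\alpha_1-z)}$ strictly decrease through the value $1$ at the single point $1-\alpha_1>0$ and forces $g(l_1)\le l_1$, and together with the unimodal shape this shows that $\underline g$ coincides with $g$ up to the point $z^\ast\le1/r_1$ solving $g(z^\ast)=g(l_1)$ and equals the constant $g(l_1)$ beyond it; a short case distinction (whether $1-\alpha_1\le z^\ast$ or not) then identifies $\underline u_1^+$ (namely $1-\alpha_1$, respectively $g(l_1)$) and the correct sign pattern, while (B2)--(B4) follow routinely from $\underline g\le g$, the bound $\underline g(z)<\underline g_0z$, and a quadratic estimate for $\underline g_0z-\underline g(z)$.

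Granting that, set $X_n(x)=\phi(x+cn)$ for $n\ge0$. The displayed inequality together with $\underline g\le g$ gives $X_{n+1}(x)\ge\int_{\mathbb{R}}\underline g(X_n(y))k_1(x-y)dy$, so $X_n$ is an upper solution of \eqref{mon} (with $b=\underline g$, $k=k_1$) starting from $\phi\le l_1=U^+$; Lemma \ref{com}(2) then gives $X_n(x)\ge W_n(x)$, where $W_n$ is the genuine solution of \eqref{mon} with $b=\underline g$, $k=k_1$ and $W_0=\phi$. Since $\phi>0$ has full support, Lemma \ref{com}(3), applied with any moving speed $c'$ below the threshold $c_0$ it provides, yields $\liminf_{n\to\infty}\inf_{|x|<c'n}W_n(x)=\underline u_1^+$. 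Writing a large $t$ as $t=x+cn$ with $x\in[0,c)$ (so $n\to\infty$ as $t\to\infty$) and using $[0,c)\subset(-c'n,c'n)$ for $n$ large, we obtain $\phi(t)=X_n(x)\ge W_n(x)$ and hence $\liminf_{t\to\infty}\phi(t)\ge\underline u_1^+>0$. Running the same argument on the second line of \eqref{2} with $\alpha_2=\sum_{i=1}^{m}e_il_2+\sum_{i=0}^{m}f_il_1<1$ gives $\liminf_{t\to\infty}\psi(t)>0$, which together with the left-hand limit is precisely \eqref{a}.
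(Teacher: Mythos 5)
Your proposal is correct and follows essentially the same route as the paper: bound the delayed competition terms by $l_1,l_2$ via \eqref{up}, reduce to a scalar integro-difference inequality, replace the Ricker nonlinearity by its nondecreasing minorant $\inf_{w\ge u}$ so that (B1)--(B4) hold, and then apply Lemma \ref{com} (comparison plus spreading) in the moving frame $t=x+cn$, arguing symmetrically for $\psi$ and taking the $t\to-\infty$ limit from Theorem \ref{th1}. The only differences are cosmetic (you take $U^+=l_1$ where the paper takes $l_1+1$), and you actually supply the verification of (B1)--(B4) and the passage from the $n\to\infty$ spreading estimate to $\liminf_{t\to\infty}$ that the paper leaves implicit.
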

\begin{proof}
Clearly,
$X_n(x)=\phi (t)\in (0,1), t\in\mathbb{R}$ satisfies
\[
\phi (t)\geq \int_{\mathbb{R}}\phi (y)e^{r_{1}(1-\phi
(y)-\sum_{i=1}^{m}a_{i}l_1-\sum_{i=0}^{m}b_{i}l_2)}k_{1}(t-c-y)dy
\]%
and
\[
\begin{cases}
X_{n+1}(x)\geq \int_{\mathbb{R}}X_{n}(y)e^{r_{1}(1-\sum_{i=1}^{m}a_{i}l_1-%
\sum_{i=0}^{m}b_{i}l_2-X_{n}(y))}k_{1}(x-y)dy,x\in\mathbb{R},n=0,1,2,\cdots,\\
X_{0}(x)=\phi (x),x\in\mathbb{R}.
\end{cases}
\]
Let $b(u)$ be defined by
\[
b(u)=\inf_{x\in [u,l_1+1]} x e^{r_{1}(1-\sum_{i=1}^{m}a_{i}l_1-%
\sum_{i=0}^{m}b_{i}l_2-x)} , u\in [0,l_1+1].
\]
Then $b(u)$ satisfies (B1)-(B4) with $U^+=l_1+1$ and some $u^+\in (0, l_1].$ From Lemma \ref{com}, we further obtain
\[
\liminf_{n\to\infty} X_{n}(0)\ge u^+ >0,
\]
and
\[
\liminf_{t\to\infty}\phi(t)\ge u^+ >0.
\]

By a similar discussion on $\psi (t),$ we complete the proof of \eqref{a}.
\end{proof}
\begin{remark}{\rm
In Theorem \ref{th2}, we obtain \eqref{a} for any $r_1,r_2 \in (0,\infty)$ with \eqref{up} even if the comparison principle fails. Although \eqref{kkkk} maybe fails, any positive solutions satisfying \eqref{2} with \eqref{a} still describe the successful invasion of two competitors.
}
\end{remark}

\section{Nonexistence of Traveling Wave Solutions}
\noindent

In this section, we shall consider the nonexistence of $(\phi,\psi)$ satisfying
\begin{equation}\label{asy}
\lim_{t\to - \infty}(\phi(t),\psi(t))=(0,0), \,\,\,  \liminf_{t\to\infty}\phi(t)>0,\,
\liminf_{t\to\infty}\psi(t)>0.
\end{equation}

\begin{lemma}\label{le4.1}
Assume that $(\phi,\psi)$ satisfying \eqref{asy} is a positive solution to \eqref{2}. Then
\[
0<\phi(t)\le \frac{e^{r_1-1}}{r_1},\,\,\,  0<\psi(t)\le \frac{e^{r_2-1}}{r_2},\,\,\, t\in\mathbb{R}.
\]
\end{lemma}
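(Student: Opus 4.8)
The plan is to exploit the single-species Ricker-type upper bound that already appeared in the proof of Lemma \ref{le2.3}, combined with the comparison principle in Lemma \ref{com}. First I would observe that for any positive solution $(\phi,\psi)$ of \eqref{2}, since all the nonnegativity assumptions (A3)--(A4) hold, one has the pointwise estimate
\[
\phi(t+c)=\int_{\mathbb{R}}\phi(y)e^{r_1(1-\phi(y)-\sum_{i=1}^m a_i\phi(y-ci)-\sum_{i=0}^m b_i\psi(y-ci))}k_1(t-y)dy\le \int_{\mathbb{R}}\phi(y)e^{r_1(1-\phi(y))}k_1(t-y)dy,
\]
because the extra terms in the exponent are nonpositive. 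Thus the profile $\phi$, written via $X_n(x)=\phi(x+cn)$, is a lower solution (in the sense of part (2) of Lemma \ref{com}) for the scalar recursion with birth function $b(u)=ue^{r_1(1-u)}$ — more precisely, $X_n$ satisfies $X_{n+1}(x)\le\int_{\mathbb{R}}b(X_n(y))k_1(x-y)dy$.

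Next I would set up the scalar comparison. The function $u\mapsto ue^{r_1(1-u)}$ is not monotone on all of $\mathbb{R}^+$, so to apply Lemma \ref{com} I would pass to the monotone envelope $b(u)=\inf_{x\in[u,\,e^{r_1-1}/r_1]}xe^{r_1(1-x)}$ on $[0,e^{r_1-1}/r_1]$, exactly as is done in the proof of Theorem \ref{th3}; this $b$ satisfies (B1)--(B4) with $u^+=1$ and $U^+=e^{r_1-1}/r_1=\max_{u\ge0}ue^{r_1(1-u)}$. The key point is that $[0,e^{r_1-1}/r_1]$ is invariant under $u\mapsto ue^{r_1(1-u)}$ (and hence under $b$), so I need $0\le\phi(t)\le e^{r_1-1}/r_1$ to even start the iteration. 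For this I would argue by a direct maximum estimate: from the displayed inequality $\phi(t+c)\le\int_{\mathbb{R}}\phi(y)e^{r_1(1-\phi(y))}k_1(t-y)dy$, and using $ue^{r_1(1-u)}\le e^{r_1-1}/r_1$ for all $u\ge0$ together with $\int k_1=1$, one gets $\phi(t+c)\le e^{r_1-1}/r_1$ for every $t$, i.e. $\phi(t)\le e^{r_1-1}/r_1$ for all $t\in\mathbb{R}$. The strict positivity $\phi(t)>0$ is part of the hypothesis that $(\phi,\psi)$ is a positive solution. The same argument applied to $\psi$ with $r_2,k_2$ gives $0<\psi(t)\le e^{r_2-1}/r_2$, completing the proof.

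The only mild subtlety — and the step I would treat most carefully — is the bookkeeping needed to phrase $\phi$ as a solution of the scalar recursion \eqref{mon} on the whole line with bounded, uniformly continuous data: one takes any reference time, uses $X_n(x)=\phi(x+cn)$ with $X_0=\phi\in X$, $0\le X_0\le e^{r_1-1}/r_1$, and notes that the recursion inequality holds for all $n\ge0$; since $e^{r_1-1}/r_1$ is a fixed point of the monotone map generated by $b$, part (1)/(2) of Lemma \ref{com} forces $X_n(x)\le e^{r_1-1}/r_1$ for all $n$, which is just the bound already obtained. In fact the direct maximum-principle estimate in the previous paragraph makes the invocation of Lemma \ref{com} unnecessary for this particular lemma, so the cleanest write-up simply uses $ue^{r_1(1-u)}\le e^{r_1-1}/r_1$ directly; I would present it that way and remark that the monotone-envelope machinery is only needed for the sharper liminf statements elsewhere.
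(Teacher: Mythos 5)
Your upper-bound argument is exactly the paper's: since the extra competition terms in the exponent are nonpositive, $\phi(t+c)\le\int_{\mathbb{R}}\phi(y)e^{r_1(1-\phi(y))}k_1(t-y)dy\le e^{r_1-1}/r_1$ because $ue^{r_1(1-u)}\le e^{r_1-1}/r_1$ for $u\ge 0$ and $\int_{\mathbb{R}}k_1=1$; the paper likewise needs no comparison machinery here, so your closing remark that the monotone-envelope/Lemma \ref{com} detour is unnecessary is correct (and the detour itself is harmless, though your claim that the envelope has $u^+=1$ is only right when $r_1\le 1$).

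The gap is in the other half of the conclusion. The lemma asserts $0<\phi(t)$ and $0<\psi(t)$ for \emph{every} $t\in\mathbb{R}$, and the paper's proof devotes essentially all of its effort to this point rather than to the upper bound: it shows that if $\phi(t_1)=0$ for some $t_1$, then from the integral equation \eqref{2} (nonnegative integrand with zero integral), the continuity of $\phi$, and an interval $[a,b]$ on which $k_1>0$ a.e., the zero of $\phi$ propagates to translated intervals and, by iteration, to all of $\mathbb{R}$, so $\phi\equiv 0$, contradicting $\liminf_{t\to\infty}\phi(t)>0$ in \eqref{asy}; the same for $\psi$. You dismiss this by declaring positivity to be part of the hypothesis ``positive solution.'' That reading is not what the lemma is for: in the nonexistence argument (Theorem \ref{th5}) the lemma is invoked precisely to guarantee that a nonnegative, nontrivial wave profile satisfying \eqref{asy} cannot vanish anywhere (so that $X_0'=\phi>0$ and the strict bounds $0<\phi(t)\le e^{r_1-1}/r_1$ hold on all of $\mathbb{R}$), i.e.\ the strict positivity is a conclusion to be proved, not assumed. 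As written, your proposal proves only the upper bound; to match the statement you must add the kernel-support propagation argument (or an equivalent argument showing a continuous nonnegative solution of \eqref{2} that vanishes at one point vanishes identically), and then derive the contradiction with \eqref{asy}.
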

\begin{proof}
Because of $k_1$ is Lebesgue integrable, then there exists a nonempty interval $[a,b]\subseteq \mathbb{R}$ such that $k_1(y)>0, a.e. y\in [a,b]$ with $b> a\ge 0.$ If $\phi(t_1)=0,$ then the continuity of $\phi(t)$ implies that
\[
\phi(t)=0, t\in [t_1+a,t_1+b], t\in [t_1-b, t_1-a].
\]
If $a=0,$ then $\phi(t)=0,t\in [t_1-2b,t_1+2b]$ by replacing $t_1$ by $t_1\pm b.$ By mathematical induction, we see that $\phi(t)=0, t\in \mathbb{R}.$

If $a\ne 0,$ then we replace $t_1$ by $t_1\pm \frac{a+b}{2},$ then we see that
\[
\phi(t)=0, t\in [t_1- (b-a), t_1+ (b-a)].
\]
After selecting finite points, we can prove that
\[
\phi(t)=0, t\in [t_1- b, t_1+b]
\]
and further have
\[
\phi(t)=0, t\in [t_1-n b, t_1+nb], n\in \mathbb{N},
\]
which implies that $\phi(t)=0,t\in\mathbb{R}.$ In a similar way, we can verify that
$
\psi(t)=0,t\in\mathbb{R}
$
if $\psi(t_2)=0$ for some $t_2\in\mathbb{R}.$

Moreover, since
\[
xe^{r_{1}(1-x-a_{1}y)} \le \frac{e^{r_1-1}}{r_1},\text{ \ \ \ }ye^{r_{2}(1-y-a_{2}x)}\le \frac{e^{r_2-1}}{r_2}
\]
for $x > 0, y > 0,$ then
\[
0<\phi(t)\le \frac{e^{r_1-1}}{r_1},\,\,\,  0<\psi(t)\le\frac{e^{r_2-1}}{r_2}.
\]
The proof is complete.
\end{proof}

We now present our main result of this section.
\begin{theorem}\label{th5}
Assume that $c<c^*.$ Then \eqref{2} has no positive solutions satisfying \eqref{asy}.
\end{theorem}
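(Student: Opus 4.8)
The plan is to argue by contradiction using the asymptotic spreading theory encapsulated in Lemma \ref{com}, exactly in the spirit of Theorem \ref{th3}. Suppose $c<c^*$ and that $(\phi,\psi)$ is a positive solution of \eqref{2} satisfying \eqref{asy}. Since $c^*=\max\{c_1^*,c_2^*\}$, without loss of generality we may assume $c<c_1^*$, that is, $c$ is below the linear spreading speed associated with the $\phi$-equation; the case $c<c_2^*$ is symmetric. By Lemma \ref{le4.1} we already know $0<\phi(t)\le l_1$ and $0<\psi(t)\le l_2$ for all $t\in\mathbb{R}$, so all quantities stay in the invariant box $[0,l_1]\times[0,l_2]$.

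The key step is to trap $\phi$ from below by a scalar monotone recursion whose linear spreading speed is still strictly larger than $c$. Using the a priori bounds and (A4), for $t\in\mathbb{R}$ one has
\[
\phi(t)\ge \int_{\mathbb{R}}\phi(y)\,e^{r_1\left(1-\phi(y)-\sum_{i=1}^{m}a_i l_1-\sum_{i=0}^{m}b_i l_2\right)}k_1(t-c-y)\,dy,
\]
but the exponent here is too small: the constant $1-\sum a_i l_1-\sum b_i l_2$ need not be positive without \eqref{up}, so this crude bound does not by itself give a supercritical speed. Instead I would exploit the boundary behavior $\lim_{t\to-\infty}(\phi,\psi)=(0,0)$: fix $\delta>0$ so small that $e^{-r_1\delta}\int_{\mathbb{R}}e^{\lambda y}k_1(y)\,dy$ still has its infimum over $\lambda>0$ (after taking logs and dividing by $\lambda$) strictly above $c$ — this is possible because $c<c_1^*=\inf_{\lambda>0}\lambda^{-1}\ln\!\big(e^{r_1}\int e^{\lambda y}k_1\big)$ and this infimum depends continuously on the shift $r_1\mapsto r_1-r_1\delta$ for small $\delta$. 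Then pick $T$ with $\phi(t)+\sum_{i=1}^m a_i\phi(t-ci)+\sum_{i=0}^m b_i\psi(t-ci)<\delta$ for all $t\le T$, so that on the half-line $t\le T$,
\[
\phi(t)\ge \int_{\mathbb{R}}\phi(y)\,e^{r_1(1-\delta-\phi(y))}k_1(t-c-y)\,dy .
\]
Define $b(u)=\inf_{x\in[u,l_1]}x\,e^{r_1(1-\delta-x)}$ for $u\in[0,l_1]$; this $b$ is nondecreasing, satisfies (B1)–(B4) with $b_0=e^{r_1(1-\delta)}$ and $U^+=l_1$, and its spreading speed is $c_0=\inf_{\lambda>0}\lambda^{-1}\ln\!\big(b_0\int e^{\lambda y}k_1\big)>c$ by the choice of $\delta$.

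The final step is the contradiction. Translate the wave into the recursion variable: setting $X_n(x)=\phi(x+cn)$ one gets, for the relevant range of $x$, $X_{n+1}(x)\ge\int_{\mathbb{R}}b(X_n(y))k_1(x-y)\,dy$ with a strictly positive, continuous initial datum $X_0$ on a half-line (hence with nonempty support). By the comparison part (2) and the spreading part (3) of Lemma \ref{com}, $\liminf_{n\to\infty}\inf_{|x|<c'n}X_n(x)=u^+>0$ for every $c'\in(0,c_0)$; choosing $c<c'<c_0$ and reading this back along the ray $x=-c'n$ forces $\liminf_{t\to-\infty}\phi(t)\ge u^+>0$, contradicting $\lim_{t\to-\infty}\phi(t)=0$ in \eqref{asy}. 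The main obstacle is the bookkeeping in the second step: one must make the localization to the half-line $t\le T$ compatible with the comparison lemma (which is stated for recursions on all of $\mathbb{R}$ with data in $[0,U^+]$), e.g. by extending $\phi$ below $T$ by a small compactly-supported-in-support subsolution and checking the subsolution inequality survives the extension, and by verifying the continuity of the modified spreading speed in $\delta$. Once those technical points are in place, the rest is the standard "below-critical-speed forces extinction" argument. $\Box$
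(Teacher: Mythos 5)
Your overall strategy coincides with the paper's: argue by contradiction, perturb the growth rate from $r_1$ to $r_1(1-\epsilon)$ so that the linear speed stays above $c$, build a monotone minorant $\underline{b}(u)=\inf_{v\in[u,\,l_1]}ve^{r_1(1-\epsilon-\cdot)}$, invoke the Hsu--Zhao spreading result (Lemma \ref{com}), and read the spreading estimate along a leftward ray $x=-c'n$ with $c<c'<c_0$ to contradict $\phi(t)\to 0$ as $t\to-\infty$. However, there is a genuine gap at the crucial step, namely the claimed subsolution inequality on the half-line $t\le T$. The operator in \eqref{2} is nonlocal: even for $t\le T$, the integral runs over all $y\in\mathbb{R}$, and for the bound $e^{r_1(1-\phi(y)-\sum_i a_i\phi(y-ci)-\sum_i b_i\psi(y-ci))}\ge e^{r_1(1-\delta-\phi(y))}$ one needs $\sum_{i=1}^m a_i\phi(y-ci)+\sum_{i=0}^m b_i\psi(y-ci)\le\delta$ at the integration variable $y$, which fails for $y$ large because \eqref{asy} keeps those terms bounded away from zero near $+\infty$. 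Since $k_1$ may have unbounded support (the Gaussian case is the motivating example), the contribution of such $y$ is always present, so the displayed inequality you use to define the minorizing recursion is false as stated; your proposed remedy (extending $\phi$ below $T$ by a compactly supported subsolution) addresses the initial datum, not the validity of the recursion inequality for all $x$, which is what Lemma \ref{com} requires.

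The paper closes exactly this hole with one additional idea you are missing: for $y\ge T$ it bounds the competition terms by $M\phi(y)$ for some large $M>1$, which is possible because $\phi$ is positive, continuous, and has $\liminf_{t\to\infty}\phi(t)>0$, hence $\inf_{y\ge T}\phi(y)>0$ while the competition terms are bounded above. Combining the two regimes ($<\epsilon$ for $y<T$, $<M\phi(y)$ for $y\ge T$) yields the globally valid inequality $\phi(t+c)\ge\int_{\mathbb{R}}\phi(y)e^{r_1(1-\epsilon-M\phi(y))}k_1(t-y)\,dy$ for all $t\in\mathbb{R}$, whose nonlinearity still has linearization $e^{r_1(1-\epsilon)}$ at $0$ and a positive fixed point $u_*$ after the inf-modification; only then do parts (2)--(3) of Lemma \ref{com} apply and the ray argument $-2x=(c_1+c')n$ produces the contradiction with $\phi(t)\to 0$ as $t\to-\infty$. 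If you insert this $M\phi(y)$ device in place of your half-line localization, your argument becomes essentially the paper's proof.
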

\begin{proof}
We assume that $\Delta_1(\lambda, c)>1$ for any $\lambda\in\mathbb{R}, c\in (0,c^*).$ Were the statement false, there exists $c_1\in (0, c^*)$ such that \eqref{2}  has a solution $(\phi, \psi)$ satisfying \eqref{asy}, then
\[
0<\phi(t)\le \frac{e^{r_1-1}}{r_1},\,\,\,  0<\psi(t)\le \frac{e^{r_2-1}}{r_2},\,\,\, t\in\mathbb{R}
\]
by Lemma \ref{le4.1}.
Let $2c'=c_1+c^*$ and $\epsilon \in (0,1)$ such that
\[
\Delta (\lambda, c)=\int_{\mathbb{R}}e^{r_1(1-\epsilon)+\lambda y -\lambda
c}k_1(y)dy >1 \text{ for any } \lambda\in\mathbb{R}, c\in (0,c').
\]

By the asymptotic of $\phi,\psi,$ we can choose $T<0$ such that
\[
\phi (y)+ \sum_{i=1}^{m}a_{i}\phi (y-ci)+\sum_{i=0}^{m}b_{i}\psi (y-ci) < \epsilon, \,\, y<T.
\]
If $t>T,$ then \eqref{asy}  implies that there exists $M>1$ such that
\[
\phi (y)+ \sum_{i=1}^{m}a_{i}\phi (y-ci)+\sum_{i=0}^{m}b_{i}\psi (y-ci)<M\phi(y), \, y\ge T.
\]

Therefore, we obtain
\[
\phi (t+c_1)\ge \int_{\mathbb{R}}\phi (y)e^{r_{1}(1-\epsilon-M\phi (y))}k_{1}(t-y)dy, t\in\mathbb{R}.
\]

Let $\phi(t)=X'_n(x),t=x+cn, $ then
\[
\begin{cases}
X'_{n+1}(x)\ge \int_{\mathbb{R}}X'_n(y)e^{r_{1}(1-\epsilon-MX'_n(y))}k_{1}(x-y)dy, x\in\mathbb{R},n=0,1,2,\cdots, \\
X'_0(x)=\phi(x) >0, x\in\mathbb{R}.
\end{cases}
\]
Define a continuous function $\underline{b}(u)$ by
\[
\underline{b}(u)=\min_{v\in \left[u, \frac{e^{r_1-1}}{r_1}\right]}ve^{r_1(1-\epsilon -Mv)},
\]
then $\underline{b}(u)=u$ has a unique root $u_*\in (0, \frac{e^{r_1-1}}{r_1}].$
We further have
\[
\begin{cases}
X'_{n+1}(x)\ge \int_{\mathbb{R}}\underline{b}(X'_n(y))k_{1}(x-y)dy, x\in\mathbb{R},n=0,1,2,\cdots, \\
X'_0(x)=\phi(x) >0, x\in\mathbb{R}.
\end{cases}
\]
in which $X'_n(x)\in \left[0, \frac{e^{r_1-1}}{r_1}\right]$ and $\underline{b}(u)$ is monotone increasing for $u\in \left[0, \frac{e^{r_1-1}}{r_1}\right].$ By Lemma \ref{com}, the following result holds
\begin{equation}\label{asym}
\liminf_{n\to\infty}\inf_{|x|<cn} X'_n(x)\ge u_* >0, c\in (0,c').
\end{equation}

Letting $-2x=(c_1+c')n$ and $n\to\infty,$ then $x+c_1n\to -\infty$ holds and a contradiction occurs between \eqref{asy} and \eqref{asym}. The proof is complete.
\end{proof}

%\section*{Acknowledgements.} The authors thank both anonymous reviewers for their helpful
% comments and suggestions.


\begin{thebibliography}{99}\setlength{\itemsep}{0mm}\linespread{1}\selectfont
\bibitem{cush}  J. M. Cushing,  S. Levarge, N.  Chitnis and S. M.  Henson,
Some discrete competition models and the competitive exclusion
principle,  \emph{J. Diff. Eqns.  Appl.,} \textbf{10} (2004),
1139-1151.

\bibitem{fisher} R.A. Fisher, The wave of advance of advantageous genes, \emph{Annals of Eugenics}, \textbf{7} (1937), 355-369.

\bibitem{haco} M. Hassell and H. Comins, Discrete time models for two-species competition, \emph{Theoretical
Population Biology}, \textbf{9} (1976),  202-221.

\bibitem{hod} D.D. Ho, A.U. Neumann, A.S. Perelson, W. Chen, J.M. Leonard, M. Markowitz,  Rapid turnover of plasma virions and CD4 lymphocytes in HIV-1 infection, \emph{Nature,}  \textbf{373} (1995), 123-126.

\bibitem{kang} Y. Kang and H. Smith, Global dynamics of a discrete two-species Lottery-Ricker competition model, \emph{J. Biol. Dyn.,} \textbf{6} (2012),   358-376.

\bibitem{hsuzhao}  S. B. Hsu and X. Q. Zhao, Spreading speeds and traveling waves
for nonmonotone integrodifference equations, \emph{SIAM J. Math.
Anal.,} \textbf{40} (2008), 776-789.

\bibitem{lewis}  M. A. Lewis, B. Li and H. F. Weinberger, Spreading speed and
linear determinacy for two-species competiotion models, \emph{J.
Math. Biol.,} \textbf{45} (2002), 219-233.

\bibitem{libingtuan}  B. Li,  Some remarks on traveling wave solutions in
competition models,  \emph{Discrete Contin. Dyn. Syst. Ser. B.,}
\textbf{12} (2009), 389-399.

\bibitem{llw} B. Li, M. A. Lewis and H.F. Weinberger, Existence of traveling
waves for integral recursions with nonmonotone growth functions,
\emph{J. Math. Biol.,}  \textbf{58} (2009), 323-338.

\bibitem{lijia} J. Li, B. Song, X. Wang,  An extended discrete Ricker population model with Allee effects, \emph{J. Difference Equ. Appl.,} \textbf{13} (2007),  309-321.



\bibitem{lix} K. Li and X. Li, Asymptotic behavior and uniqueness of traveling wave solutions in Ricker competition system, \emph{J. Math. Anal. Appl.,}  \textbf{389}  (2012),    486-497.

\bibitem{lili} K. Li and X. Li, Travelling wave solutions in integro-difference competition system,  \emph{IMA J. Appl. Math.,}  \textbf{78} (2013), 633-650.

\bibitem{liangzhao}  X. Liang and X.Q. Zhao, Asymptotic speeds of spread and traveling waves
for monotone semiflows with applications, \emph{Comm. Pure Appl.
Math.,} \textbf{60} (2007), 1-40.

\bibitem{lin} G. Lin, Traveling
wave solutions for integro-difference systems, preprint. (http://arxiv.org/abs/1305.4031).

\bibitem{ll-jmaa} G. Lin and W.T. Li, Spreading speeds and traveling wavefronts for second order integrodifference equations, \emph{J. Math. Anal. Appl.,}  \textbf{361}  (2010),   520-532.

\bibitem{linlidc}  G. Lin and W.T.  Li, Traveling wave solutions of a competitive recursion, \emph{Discrete Contin. Dyn. Syst. Ser. B,} \textbf{17} (2012), 173-189.

\bibitem{llrjmb}  G. Lin, W. T. Li and S. Ruan, Spreading speeds and traveling
waves in  competitive recursion systems, \emph{J. Math. Biol.,} \textbf{62} (2011), 162-201.

\bibitem{murray}  J.D. Murray, Mathematical Biology. I. An introduction.
Third Edition. Springer-Verlag, New York, 2002.

\bibitem{pl} S. Pan, G. Lin, Propagation of second order integrodifference equations with local monotonicity, \emph{Nonlinear Anal. RWA}, \textbf{12} (2011), 535-544.

\bibitem{shi} N. Shigesada, K. Kawasaki,  Biological Invasions: Theory and Practice, Oxford University Press, Oxford, 1997.

\bibitem{vol} A.I. Volpert, V.A. Volpert, V.A. Volpert,
Traveling Wave Solutions of Parabolic Systems, Translations of
Mathematical Monographs  140, AMS, Providence, Rhode Island, 1994.

\bibitem{v} V. Volterra, Fluctuations in the abundance of a
species considered mathematically, \emph{Nature}, \textbf{118}
(1926), 558-560.


\bibitem{wc} H. Wang and C. Castillo-Chavez, Spreading speeds and traveling waves for non-cooperative integro-difference systems, \emph{Discrete Contin. Dyn. Syst. Ser. B,}
\textbf{17} (2012), 2243-2266.

\bibitem{wein} H.F. Weinberger, Long-time behavior
of a class of biological model, \textit{SIAM J. Math. Anal.,}
\textbf{13} (1982), 353-396.

\bibitem{weinberger}  H. F. Weinberger, M. A. Lewis and B. Li, Analysis of linear
determinacy for spread in cooperative models, \emph{J. Math. Biol.,}
\textbf{45} (2002), 183-218.

\bibitem{yi} T. Yi, Y. Chen, J. Wu, Unimodal dynamical systems: Comparison principles, spreading speeds and traveling waves, \emph{J. Differential Equations,} \textbf{254} (2013), 3538-3572.
\end{thebibliography}
\end{document}